\newcommand{\arxiv}[1]{\url{http://arxiv.org/abs/#1}}
\newcommand{\F}{\mathcal{F}}
\newtheorem{thm}{Theorem}[section]
\newtheorem{lem}[thm]{Lemma}
\newtheorem{cor}[thm]{Corollary}
\theoremstyle{definition}
\theoremstyle{definition}
\newtheorem{definition}[equation]{Definition}
\newcommand\mat[2]{{\mathrm{Mat}}_{#1\times #2}}
\newcommand\umat[2]{{\mathrm{Mat}}_{#1\times #2}^*}
\newcommand\Cat{\operatorname{Cat}}
\newcommand\SYT{\operatorname{SYT}}
\newcommand\CT{\operatorname{CT}}
\newcommand\vol{\operatorname{vol}}
\newcommand{\CTn}[1]{\CT_{x_{#1}}\cdots\CT_{x_1}}
\newtheorem*{lemma*}{Lemma}
\newtheorem*{corollary*}{Corollary}
\newtheorem*{theorem*}{Theorem}
\newtheorem*{theorem1*}{Theorem \ref{thm:2}}
\newtheorem*{theorem2*}{Theorem \ref{thm:abm}}
\newtheorem*{theorem3*}{Theorem \ref{111}}
\def\ee{{\bf e}}
   \def\vol{{\rm vol}}
\def\vv{{\rm v}}
 \def\f_H{{\bf w}}
 \def\f{{\bf f}}
 \def\a{{\bf a}}
\def\R{\mathbb{R}}
\def\Z{\mathbb{Z}}
\def\g{{\bf a}}
 \def\F{\mathcal{F}}
\def\ee{{\bf e}}
   \def\vol{{\rm vol}}
\def\vv{{\rm v}}
\def\aa{{\bf a}}
 \def\f_H{{\bf w}}
 \def\f{{\bf f}}
 \def\a{{\bf a}}
\def\R{\mathbb{R}}
\def\Z{\mathbb{Z}}
\def\g{{\bf a}}
 \def\F{\mathcal{F}}
\def\CT{CT}
\def\ee{{\bf e}}
   \def\vol{{\rm vol}}
\def\vv{{\rm v}}
\def\aa{{\bf a}}
 \def\f_H{{\bf w}}
 \def\f{{\bf f}}
 \def\a{{\bf a}}
\def\vvv{{\bf v}}
\def\R{\mathbb{R}}
\def\Z{\mathbb{Z}}
\def\g{{\bf a}}
 \def\F{\mathcal{F}}
\newcommand{\be}{\begin{equation}}
\newcommand{\eee}{\end{equation}}
\newcommand{\bd}{\begin{definition}}
\newcommand{\ed}{\end{definition}}
\newcommand{\bt}{\begin{thm}}
\newcommand{\et}{\end{thm}}
\newcommand{\bl}{\begin{lem}}
\newcommand{\el}{\end{lem}}
\newcommand{\bp}{\begin{proposition}}
\newcommand{\ep}{\end{proposition}}
\newcommand{\bc}{\begin{cor}}
\newcommand{\ec}{\end{cor}}
\def\R{\mathbb{R}}
\begin{document}

\tikzstyle{w}=[label=right:$\textcolor{red}{\cdots}$] 
\tikzstyle{b}=[label=right:$\cdot\,\textcolor{red}{\cdot}\,\cdot$] 
\tikzstyle{bb}=[circle,draw=black!90,fill=black!100,thick,inner sep=1pt,minimum width=3pt] 
\tikzstyle{bb2}=[circle,draw=black!90,fill=black!100,thick,inner sep=1pt,minimum width=2pt] 
\tikzstyle{b2}=[label=right:$\cdots$] 
\tikzstyle{w2}=[]
\tikzstyle{vw}=[label=above:$\textcolor{red}{\vdots}$] 
\tikzstyle{vb}=[label=above:$\vdots$] 

\tikzstyle{level 1}=[level distance=3.5cm, sibling distance=3.5cm]
\tikzstyle{level 2}=[level distance=3.5cm, sibling distance=2cm]

\tikzstyle{bag} = [text width=4em, text centered]
\tikzstyle{end} = [circle, minimum width=3pt,fill, inner sep=0pt]

\title[Flow polytopes with Catalan volumes]{Flow polytopes with Catalan volumes}
\author{Sylvie Corteel}
\address{Sylvie Corteel, IRIF, CNRS et Universit\'e Paris Diderot, 75205 Paris Cedex 13, France.
{corteel@irif.fr}}

\author{Jang Soo Kim}
\address{Jang Soo Kim, Sungkyunkwan University,
2066 Seobu-ro, Jangan-gu, 
Suwon, Gyeonggi-do 16419
South Korea. {jangsookim@skku.edu}}

\author{Karola M\'esz\'aros}
\address{Karola M\'esz\'aros, Department of Mathematics, Cornell University, Ithaca NY 14853.  \newline{ karola@math.cornell.edu}
}

\thanks{Corteel is partially supported by the project Emergences ``Combinatoire \`a Paris''. 
Kim is partially supported by
National Research Foundation of Korea (NRF) grants (NRF-2016R1D1A1A09917506) and (NRF-2016R1A5A1008055). 
M\'esz\'aros is partially supported by a National Science Foundation Grant  (DMS 1501059).}

\date{\today}

\begin{abstract} The Chan-Robbins-Yuen polytope can be thought of as the flow polytope of the complete graph with 
netflow vector $(1, 0, \ldots, 0, -1)$. The  normalized volume of the Chan-Robbins-Yuen polytope  equals the product of consecutive 
Catalan numbers, yet there is no combinatorial proof of this fact. We consider a natural generalization of this polytope, namely,  the flow 
polytope of the complete graph with netflow vector $(1,1, 0, \ldots, 0, -2)$. We show that the volume of this polytope is a certain power of $2$ times  
the product of consecutive Catalan numbers. Our proof uses constant term identities and further deepens the combinatorial mystery of why these numbers appear. 
In addition we introduce two more  families of flow polytopes whose volumes are given by product formulas. \end{abstract}

\maketitle
\section{Introduction}
\label{sec:intro}

We underscore  the wealth of flow polytopes with product formulas for volumes. The natural question arising from our study and previous work \cite{Z, Z1, CR, CRY,mm, BV2, m-prod, tesler} is: is there a unified (combinatorial?) explanation for these beautiful product formulas? All current results relating to these volumes  show these formulas as a result of various computations that surprisingly yield products. Our hope is that by identifying   three more distinguished families of flow polytopes with  beautiful product formulas for their volumes we inch closer to uncovering an illuminating explanation for these formulas. 

The {\bf flow polytope} $\F_G(\a)$ is associated to a graph $G$ on the vertex set $\{1,\ldots, n\}$ with edges directed from smaller to larger vertices and netflow vector ${\a}=(a_1, \ldots, a_n) \in \Z^n$. The  points of $\F_G(\a)$ are nonnegative flows  on the edges of $G$  so that  flow is conserved at each vertex; see Figure \ref{poly} (Section \ref{sec:def} has precise definition). Flow polytopes are closely related to Kostant partition functions \cite{BV2, mm}, Grothendieck polynomials \cite{pipe, pipe1, toric}, 
and the space of diagonal harmonics \cite{tesler, tesler1},  among others. 

\begin{figure}
\begin{center}
\includegraphics[scale=.6]{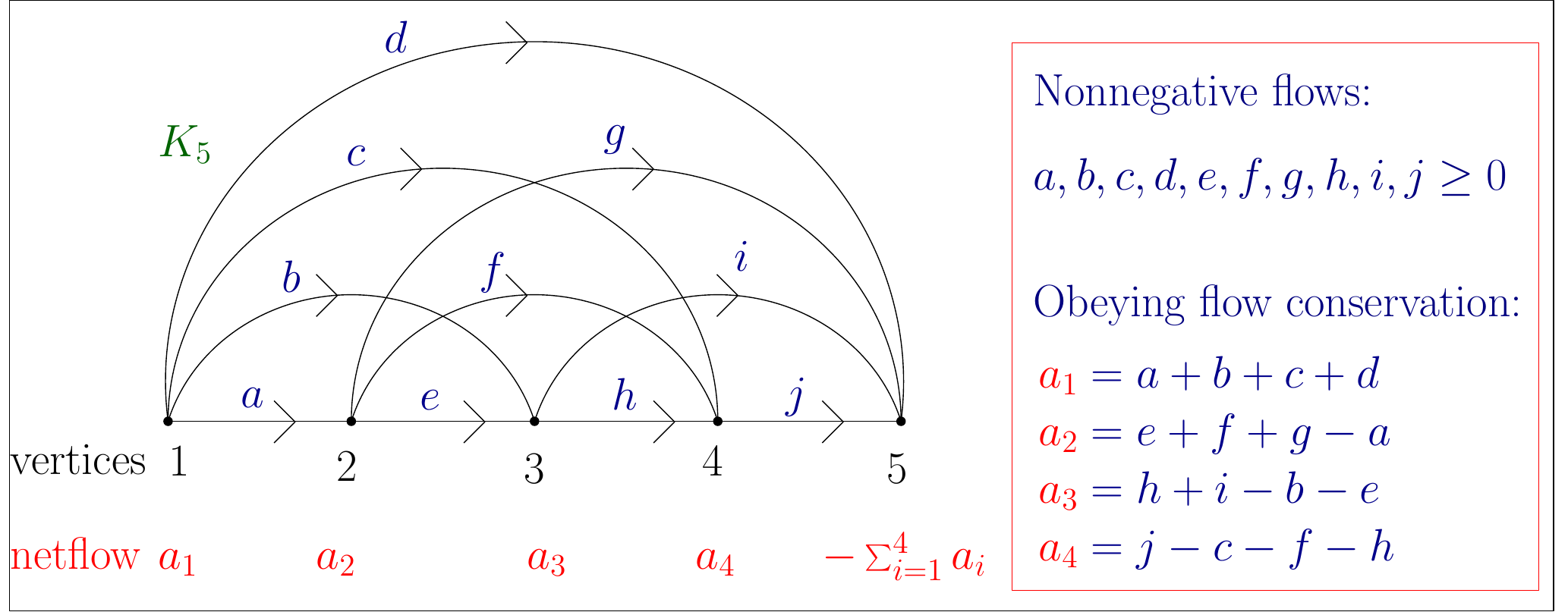}
\caption{The flow polytope $\mathcal{F}_{K_5}(a_1, a_2, a_3, a_4, -\sum_{i=1}^4 a_i)$ consists of all points $(a,b,c,d,e,f,g,h,i,j) \in \mathbb{R}^{10}$ satisfying the inequalities and equations displayed to the right of ~$K_5$.}
\label{poly}
\end{center}
\end{figure}

Perhaps the most famous flow polytope is $\F_{K_{n+1}}(1,0, \ldots, 0, -1)$, the flow polytope of the complete graph, also referred to as  the \textbf{Chan--Robbins--Yuen  polytope} ($CRY_n$) \cite{CRY}. Chan, Robbins and Yuen defined $CRY_n$ as the convex hull of the set of $n\times n$ permutation matrices $\pi$ with $\pi_{ij}=0$ if $j\geq i+2$, which can be shown to be integrally equivalent to   $\F_{K_{n+1}}(1,0, \ldots, 0, -1)$. (Thus, $CRY_n$ and $\F_{K_{n+1}}(1,0, \ldots, 0, -1)$ are combinatorially equivalent, and have the same volume and Ehrhart polynomial.) The polytope $CRY_n$ is  a face of the Birkhoff polytope, the polytope of all doubly stochastic matrices, prominent in combinatorial optimization. Remarkably, the volume of the $CRY_n$ polytope is the product of the first $n-2$ Catalan numbers, as conjectured by Chan, Robbins and Yuen in \cite{CRY} and proved by Zeilberger analytically in \cite{Z}.  Under volume in this paper we mean the normalized volume of a polytope. The  {\bf normalized volume} of a
$d$-dimensional polytope $P \subset \mathbb{R}^n$, denoted by $\vol \hspace{.02in} P$, is the volume form which 
 assigns a volume of one to the smallest $d$-dimensional integer simplex in the affine span of $P$.

 Several generalizations of  $CRY_n$   are introduced and studied in    \cite{m-prod,mm, tesler}. The volume formulas of the aforementioned polytopes are akin that of  $CRY_n$. In this paper we identify three new families of flow polytopes generalizing  $CRY_n$. In particular,  we study  the flow polytope of the complete graph with netflow vector $(1,1, 0, \ldots, 0, -2)$ and  show that its volume is a power of $2$ times  the product of consecutive Catalan numbers. Furthermore, if we take the complete graph with various multiple edges and consider the corresponding flow polytope with netflow vectors  $(1, 0, \ldots, 0, -1)$ or $(1,\ldots, 1, -n)$ we still obtain product formulas for their volumes, as a result of the generalized Lidskii formulas \cite{BV2} and the Morris (and the like) constant term identity \cite{WM}. Combinatorial proofs remain elusive, but all the more enticing.

Now we state our results regarding the three new families of polytopes we study in this paper. For definitions and background see Section \ref{sec:def}.

\begin{thm} \label{thm:2}The normalized volume of the flow polytope $\F_{K_{n+1}}(1,1,0, \ldots, 0, -2)$ is 

\begin{equation*} \vol \F_{K_{n+1}}(1,1,0, \ldots, 0, -2)= 2^{{n \choose 2}-1}\prod_{i=1}^{n-2} \Cat(i), \end{equation*} where $\Cat(i)=\frac{1}{i+1}{2i \choose i}$ is the $i$th Catalan number. 
\end{thm}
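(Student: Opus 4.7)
The plan is to apply the Baldoni--Vergne (generalized Lidskii) volume formula, which expresses $\vol \F_G(\mathbf{a})$ as a weighted sum of Kostant partition function values of $G$, indexed by weak compositions of the dimension $\dim \F_G$. For $G = K_{n+1}$, the dimension is $\binom{n}{2}$, and the weights involve multinomial coefficients together with monomials in the entries of the netflow. Specializing to $\mathbf{a} = (1,1,0,\ldots,0,-2)$, the monomial factor $a_1^{j_1}\cdots a_{n-1}^{j_{n-1}}$ forces $j_3 = \cdots = j_{n-1} = 0$, so the sum collapses dramatically to the one-parameter expression
\begin{equation*}
\vol \F_{K_{n+1}}(1,1,0,\ldots,0,-2) \;=\; \sum_{k=0}^{\binom{n}{2}} \binom{\binom{n}{2}}{k}\, K_{K_{n+1}}\!\bigl(\mathbf{v}(k)\bigr),
\end{equation*}
where $\mathbf{v}(k)$ is an explicit integer vector determined by $k$ and the indegree sequence of $K_{n+1}$. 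This parallels the Chan--Robbins--Yuen situation, in which only $k=\binom{n}{2}$ contributes and the volume is a single Kostant partition function value evaluated by Zeilberger.

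Next, I would rewrite each $K_{K_{n+1}}(\mathbf{v}(k))$ as a constant term via
\begin{equation*}
K_{K_{n+1}}(u_1,\ldots,u_n) \;=\; \CT_{x_1,\ldots,x_n}\, \prod_{i=1}^{n} x_i^{-u_i} \prod_{1 \le i < j \le n+1} \frac{1}{1 - x_i/x_j},
\end{equation*}
with the convention $x_{n+1}=1$. Pulling the $k$-sum inside the constant term, the binomial coefficients $\binom{\binom{n}{2}}{k}$ assemble the Laurent monomials in $x_1, x_2$ into a single factor of the form $(x_1^{-1}+x_2^{-1})^{\binom{n}{2}}$ times a common prefactor, so that the volume reduces to a single constant term
\begin{equation*}
\vol \F_{K_{n+1}}(1,1,0,\ldots,0,-2) \;=\; \CT_{x_1,\ldots,x_n}\, (x_1^{-1}+x_2^{-1})^{\binom{n}{2}} \cdot F(x_1,\ldots,x_n),
\end{equation*}
where $F$ is the Morris-type rational integrand whose constant term governs the CRY volume.

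The main obstacle, and the crux of the proof, is the evaluation of this single constant term. I would expand $(x_1^{-1}+x_2^{-1})^{\binom{n}{2}}$ binomially, so that each summand becomes a shifted instance of the \emph{Morris constant term identity}---the very identity Zeilberger invoked for CRY. Most summands should vanish for parity or degree reasons, and the surviving contributions ought to telescope into a clean product: the $\prod_{i=1}^{n-2}\Cat(i)$ factor comes directly from the Morris evaluation, while the prefactor $2^{\binom{n}{2}-1}$ emerges from the resummed binomial coefficients, reflecting the extra symmetry between the two unit sources of flow at vertices $1$ and $2$ (as opposed to the single source in CRY). The delicate step will be identifying exactly which shifted Morris instances contribute non-trivially and bookkeeping the coefficients carefully so that they re-assemble into the stated product formula.
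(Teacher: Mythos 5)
Your first step is sound and is essentially the paper's own reduction (Lemma \ref{lem1}): for the netflow $(1,1,0,\ldots,0,-2)$ the Lidskii formula \eqref{eq:vol} collapses to a sum over $i_1+i_2=\binom{n}{2}$, and after passing to constant terms the binomial theorem packages everything into
\[
\vol\F_{K_{n+1}}(1,1,0,\ldots,0,-2)=\CT_{x_n,\ldots,x_1}\,\frac{(x_{n-1}+x_n)^{\binom{n}{2}}}{\prod_{1\le i<j\le n}(x_j-x_i)}.
\]
(One small slip: the Kostant partition function in the Lidskii sum is that of the restricted graph $K_n$, not of $K_{n+1}$.)

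The gap is in your final step, which is the actual crux of the theorem. Expanding $(x_{n-1}+x_n)^{\binom{n}{2}}$ binomially does \emph{not} produce shifted instances of the Morris identity. A typical summand is
\[
\CT_{x_n,\ldots,x_1}\, x_{n-1}^{t}\,x_n^{\binom{n}{2}-t}\prod_{1\le i<j\le n}(x_j-x_i)^{-1},
\]
in which only two variables carry exponents, and different ones; the Morris integrand instead attaches the same factor $x_i^{-a}(1-x_i)^{-b}$ to every variable, and its evaluation uses that symmetry essentially. Each summand above is a single Kostant partition function value $K_{K_n}\bigl(0,1,\ldots,n-3,\,n-2-t,\,n-1-\tbinom{n}{2}+t\bigr)$; these do not vanish for parity or degree reasons (they are positive for all $t\le\binom{n-1}{2}$, i.e.\ for most of the range) and individually admit no product formula, so there is nothing to telescope. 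This is precisely why the paper states that neither the Morris identity nor its known variants suffice here. What replaces your step is a genuinely new constant term identity (Lemma \ref{lem:gen}): one interprets the constant terms as counting upper-triangular nonnegative integer matrices with diagonal $(0,1,\ldots,n-1)$ and prescribed hook sums (Lemma \ref{lem:expand}), and constructs an explicit bijection $\phi:X\uplus X'\to Y\times\{0,1,\ldots,\binom{n}{2}\}$ showing that the binomially weighted count of $n$-row matrices equals $2^{\binom{n}{2}}$ times the number of $(n-2)$-row matrices; that latter count is the CRY constant term \eqref{eq:cry}, and only at that point does the Morris identity (in Zeilberger's CRY specialization) enter. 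Without an argument of this kind, your outline cannot be completed as stated.
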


Let $\Gamma(\cdot)$ denote the Gamma function. In particular, $\Gamma(j)=(j-1)!$ when $j\in \mathbb{N}$.
\begin{thm} \label{thm:abm} Denote by $K_{n+1}^{a,b,m}$ the graph on the vertex set $[n+1]$ with each edge $(1,i)$, $i \in [2,n]$, appearing $a$ times, 
edge $(i,n+1)$, $i \in [2,n]$, appearing $b$ times,  and $(i,j)$, $1<i<j<n+1$ appearing $m$ times. Then we have that 
\begin{equation*} \vol \F_{K_{n+1}^{a,b,m}}(1,0, \ldots, 0, -1)= \frac{1}{(n-1)!} \prod_{j=0}^{n-2} \frac{\Gamma(a-1+b+(n-2+j)\frac{m}{2}) \Gamma( \frac{m}{2} )}
{\Gamma(a+j\frac{m}{2})\Gamma(b+j\frac{m}{2}) \Gamma(\frac{m}{2}+j\frac{m}{2})}. \end{equation*}
\end{thm}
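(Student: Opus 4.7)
The plan is to reduce the volume to a constant term evaluation and then apply the W.G.~Morris constant term identity. For the concentrated netflow vector $\mathbf{a}=(1,0,\ldots,0,-1)$, the generalized Lidskii formula of Baldoni--Vergne collapses to a single Kostant partition function evaluation on the graph (rather than a multinomial-weighted sum of such evaluations). First I would apply this specialization to $G = K_{n+1}^{a,b,m}$, so that $\vol \F_{G}(1,0,\ldots,0,-1)$ is read off as a specific monomial coefficient in the Kostant generating function
\begin{equation*}
\prod_{(i,j)\in E(G)}\frac{1}{1-z_i/z_j} \;=\; \prod_{i=2}^n(1-z_1/z_i)^{-a}\prod_{i=2}^n(1-z_i/z_{n+1})^{-b}\prod_{2\le i<j\le n}(1-z_i/z_j)^{-m}.
\end{equation*}

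Setting $z_1=z_{n+1}=1$ reduces the extraction to a constant term in the $n-1$ variables $z_2,\ldots,z_n$, one per interior vertex of $K_{n+1}^{a,b,m}$. To bring this into Morris normal form, I would rewrite the asymmetric product $\prod_{i<j}(1-z_i/z_j)^{-m}$ as the symmetric Morris factor $\prod_{i<j}(1-z_i/z_j)^{-m/2}(1-z_j/z_i)^{-m/2}$ multiplied by an explicit monomial shift coming from the identity $(1-z_j/z_i) = -(z_j/z_i)(1-z_i/z_j)$. Gathering these monomial shifts together with the out-degree offsets prescribed by the Lidskii formula, the integrand assumes the shape
\begin{equation*}
\prod_{i=2}^n (1-z_i^{-1})^{-a}(1-z_i)^{-b}\prod_{2\le i<j\le n}\Bigl(1-\frac{z_i}{z_j}\Bigr)^{-m/2}\Bigl(1-\frac{z_j}{z_i}\Bigr)^{-m/2},
\end{equation*}
which is precisely the integrand in Morris's constant term identity on $k = n-1$ variables with parameters $(a,b,m/2)$. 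Invoking his closed form then produces the displayed product of Gamma functions with $j$ running from $0$ to $n-2$, and the prefactor $\tfrac{1}{(n-1)!}$ arises from the standard symmetrization in Morris's formula.

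The main obstacle is the parameter bookkeeping. The conversion of the asymmetric $(1-z_i/z_j)^{-m}$ into a symmetric Morris factor introduces monomial and sign contributions that must be combined carefully with the exponents dictated by the Lidskii reduction (which depend on the out-degrees $(n-i)m+b$ of interior vertices $i\in[2,n]$); otherwise one ends up extracting the wrong coefficient and the product formula is wrong. The $-1$ shift appearing in the numerator $\Gamma(a-1+b+(n-2+j)\tfrac{m}{2})$ and the fact that $a$ and $b$ themselves --- rather than $a-1$ or $b-1$ --- appear in the denominator are both reflections of this accounting: the Lidskii out-degree shift and the monomial redistribution combine to produce exactly the parameter specialization $(a,b,m/2)$ of Morris, with the asymmetric $-1$ absorbed into the numerator argument. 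Once the integrand is reliably in Morris form, the rest of the proof is a direct invocation of his identity; the creative content lies in recognizing $K_{n+1}^{a,b,m}$ as the ``right'' generalization whose Kostant generating function is governed by Morris.
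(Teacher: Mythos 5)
Your high-level strategy is the same as the paper's: specialize the Lidskii/Postnikov--Stanley formula at the netflow $(1,0,\ldots,0,-1)$ to get a single Kostant partition function value, rewrite it as a constant term, and invoke the Morris identity. However, the step where you derive the constant-term expression --- the entire technical content of the proof --- is wrong as proposed. Setting $z_1=z_{n+1}=1$ is not a legitimate way to extract the required Kostant coefficient. The volume equals the coefficient of $z_1^{0}\prod_{i=2}^n z_i^{d_i}z_{n+1}^{-\sum_i d_i}$, where $d_i=a-1+(i-2)m$ is the indegree of vertex $i$ minus one; extracting the coefficient of $z_1^{0}$ forces zero flow on every edge $(1,i)$, so it simply deletes the factors $(1-z_1/z_i)^{-a}$ (each contributes its constant term $1$), and the $a$-dependence survives \emph{only} through the target exponents $d_i$, i.e.\ as a monomial $z_i^{-(a-1)}$ after the Vandermonde rewriting --- not as a factor $(1-z_i^{-1})^{-a}$ in the integrand. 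Keeping $(1-z_i^{-1})^{-a}$ amounts to summing over all flows on the $(1,i)$-edges rather than just the zero flow, and after setting both $z_1$ and $z_{n+1}$ to $1$ your expression is not even a well-defined Laurent series: the coefficients of $(1-z_i^{-1})^{-a}(1-z_i)^{-b}$ are divergent sums under the natural expansions. A concrete check: for $n=2$ the polytope $\F_{K_3^{a,b,m}}(1,0,-1)$ is $\Delta_{a-1}\times\Delta_{b-1}$ with normalized volume $\binom{a+b-2}{a-1}$, which the correct reduction $CT_{x_1}\, x_1^{-(a-1)}(1-x_1)^{-b}$ reproduces, whereas your integrand $CT_{z_2}\,(1-z_2^{-1})^{-a}(1-z_2)^{-b}$ gives either a divergent sum or $0$ depending on the expansion convention.

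The symmetrization of $\prod_{i<j}(1-z_i/z_j)^{-m}$ into $\prod_{i<j}(1-z_i/z_j)^{-m/2}(1-z_j/z_i)^{-m/2}$ is likewise both unnecessary and ill-defined: for odd $m$ the exponents are not integers, and for negative integer exponents the ``symmetric'' product has no canonical Laurent expansion (any choice of expansion region converts it right back into the asymmetric form times monomials). The Morris identity in the form actually needed here (Lemma \ref{morrisID}, the form Zeilberger used for $CRY_n$) is already asymmetric, $CT_{x_n,\ldots,x_1}\prod_i x_i^{-a}(1-x_i)^{-b}\prod_{i<j}(x_j-x_i)^{-m}$, with the halved parameter $m/2$ appearing only in the Gamma functions of the \emph{answer}, never as an exponent in the integrand; you appear to have read the $m/2$'s in the product formula back into the integrand. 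The correct chain, which is the paper's, is: volume $=CT_{x_{n-1},\ldots,x_1}\prod_{i=1}^{n-1}x_i^{-(a-1)}(1-x_i)^{-b}\prod_{1\le i<j\le n-1}(x_j-x_i)^{-m}$ (Lemma \ref{?}), followed by Morris with parameters $(n-1,\,a-1,\,b,\,m)$, where $\Gamma\bigl((a-1)+jm/2+1\bigr)=\Gamma(a+jm/2)$ yields the denominator in the statement. Your closing paragraph acknowledges that the ``parameter bookkeeping'' is the main obstacle but then asserts rather than performs it; since the integrand you propose to feed into Morris is not equal to the volume, that bookkeeping cannot be completed as described.
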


\begin{thm} \label{111}  Denote by $K_{n+1}^{a,b}$ the graph on the vertex set $[n+1]$ with edges $(i,j)$,  
$1\leq i<j\leq n$, appearing with multiplicity $a$ and the edges $(i,n+1)$, $ i \in [n]$, appearing with multiplicity $b$. 
 For $n\geq 2$ and nonnegative integers
$a,b$ we have that
 \[
\vol\F_{K_{n+1}^{a,b}}(1,1, \ldots, 1, -n)=\big((b-1)n+a{\textstyle\binom{n}{2}}\big)!
\prod_{i=0}^{n-1} \frac{\Gamma(1+a/2)}{\Gamma(1+(i+1)a/2)\Gamma(b+ia/2)}.
\]

\end{thm}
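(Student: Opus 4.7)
The plan, as signaled in the introduction, is a two-step reduction: apply the Baldoni-Vergne/Postnikov-Stanley Lidskii formula to express the normalized volume in terms of Kostant partition functions of $K_{n+1}^{a,b}$, then invoke Morris's constant term identity \cite{WM} to close the calculation. The polytope has dimension $d = a\binom{n}{2} + (b-1)n$ (number of edges minus $n$ flow conservation relations), so the factorial $d!$ in the theorem is precisely the ratio between normalized and Euclidean volume; what one must actually compute is the Euclidean volume, whose closed form should match the indicated product of Gammas.

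For the first step, the Lidskii formula applied to the netflow $(1,1,\ldots,1,-n)$ expresses
\[
\vol \F_{K_{n+1}^{a,b}}(1,\ldots,1,-n) = \sum_{\mathbf{j}} \binom{d}{j_1,\ldots,j_n}\, K_{K_{n+1}^{a,b}}(\mathbf{j} - \mathbf{t}),
\]
where $K_G$ denotes the Kostant partition function, $\mathbf{t}$ encodes indegrees at the internal vertices, and the sum runs over appropriate compositions. The Kostant partition function of $K_{n+1}^{a,b}$ is itself a constant term
\[
K_{K_{n+1}^{a,b}}(\mathbf{s}) = \CT_{\mathbf{x}}\, \mathbf{x}^{-\mathbf{s}} \prod_{1 \le i < j \le n}\left(1 - \tfrac{x_i}{x_j}\right)^{-a} \prod_{i=1}^n (1-x_i)^{-b},
\]
so using $\sum_{\mathbf{j}}\binom{d}{\mathbf{j}}\mathbf{x}^{\mathbf{j}} = (x_1+\cdots+x_n)^d$, the entire Lidskii sum collapses into a single closed constant-term expression. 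After symmetrizing the one-sided Vandermonde factor $\prod_{i<j}(1-x_i/x_j)^{-a}$ into the Weyl-symmetric form $\prod_{i<j}(1-x_i/x_j)^{-a/2}(1-x_j/x_i)^{-a/2}$ (a standard device going back to Zeilberger's proof \cite{Z} of the $CRY$ conjecture), one arrives at a Morris-type constant term.

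One then applies Morris's constant term identity
\[
\CT_{\mathbf{x}} \prod_i (1-x_i)^{-\alpha}(1 - x_i^{-1})^{-\beta} \prod_{i<j}\Bigl(1-\tfrac{x_i}{x_j}\Bigr)^{-\gamma}\Bigl(1-\tfrac{x_j}{x_i}\Bigr)^{-\gamma} = \prod_{j=0}^{n-1} \frac{\Gamma(\alpha+\beta+(n+j-1)\gamma)\Gamma((j+1)\gamma)}{\Gamma(\alpha+j\gamma)\Gamma(\beta+j\gamma)\Gamma(\gamma)}
\]
with $\gamma = a/2$, $\beta = b$, and $\alpha$ chosen to absorb the netflow-induced shifts. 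After cancellation of common Gamma factors (using $\Gamma(1+a/2) = (a/2)\Gamma(a/2)$), the right-hand side simplifies to the stated product $\prod_{i=0}^{n-1} \Gamma(1+a/2)/[\Gamma(1+(i+1)a/2)\Gamma(b+ia/2)]$. The main obstacle is step one: verifying that the Lidskii sum collapses cleanly into exactly the Morris integrand with parameter choices producing the shift $1+(i+1)a/2$ (rather than $(i+1)a/2$) in the denominator. This extra $+1$ reflects the difference between the all-ones netflow here and a unit-vector netflow, and getting it to land correctly is where the bookkeeping is most delicate. Once the parameter matching is pinned down, the remaining simplification is routine Gamma-function algebra.
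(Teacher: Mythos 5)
Your high-level plan---collapse the Lidskii sum \eqref{eq:vol} into a single constant term via the multinomial theorem, then evaluate that constant term by a closed-form identity---is the same as the paper's (Lemmas \ref{lem:ab} and \ref{ale}). But your second step has a genuine gap. The constant term that comes out of the collapse is
\[
\CT_{x_n,\ldots,x_1}\,(x_1+\cdots+x_n)^{a\binom{n}{2}+n(b-1)}\prod_{i=1}^n x_i^{-b+1}\prod_{1\le i<j\le n}(x_j-x_i)^{-a},
\]
and this is \emph{not} an instance of the Morris identity of \cite{WM,Z} (Lemma \ref{morrisID}), in either its one-sided or its symmetrized Selberg-type form: Morris's integrand has $\prod_i x_i^{-\alpha}(1-x_i)^{-\beta}$ where this one has the factor $(x_1+\cdots+x_n)^{a\binom{n}{2}+n(b-1)}$, and no choice of $\alpha$ can ``absorb'' a power of $x_1+\cdots+x_n$, since $\alpha$ only governs monomial-type factors; your symmetrization device only reshapes the Vandermonde part and does not bridge this either. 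The identity actually required is the Morris-\emph{type} identity of \cite[Lemma 3.5]{tesler}, quoted in the paper as Lemma \ref{ale}, which is a separate nontrivial result---this is precisely the distinction the introduction draws: Theorem \ref{thm:abm} follows from Morris's identity, while Theorem \ref{111} needs the Morris-type identity from \cite{tesler}. So what you call ``routine Gamma-function algebra'' conceals the entire difficulty; note also that Morris's right-hand side carries numerator factors $\Gamma(\alpha+\beta+(n-1+j)\gamma)$ and no factorial $\big((b-1)n+a\binom{n}{2}\big)!$, and you give no derivation showing it can be transformed into the theorem's product.

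Your first step also has slips, though they turn out to be reparable. The Lidskii formula \eqref{eq:vol} uses the Kostant partition function of the \emph{restricted} graph $G'=K_n^a$ (all $b$-fold edges into vertex $n+1$ deleted), with shifts $t_i$ equal to \emph{out}degrees minus one, $t_i=(n-i)a+b-1$, not indegrees; the parameter $b$ enters only through these shifts. Your version, with $K_{K_{n+1}^{a,b}}$ and its factors $\prod_i(1-x_i)^{-b}$, happens to give the same numbers because every argument $\mathbf{j}-\mathbf{t}$ in the sum has coordinate sum zero, which forces zero flow on all edges into vertex $n+1$; equivalently, by homogeneity, those $(1-x_i)^{-b}$ factors contribute only their constant term $1$ in the collapsed expression. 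But this observation undercuts your step two: the $(1-x_i)^{-b}$ factors in your integrand are inert, so they cannot play the role of Morris's $(1-x_i)^{-\beta}$ with $\beta=b$ that your parameter matching requires, confirming that the reduction to Morris's identity cannot be carried out as described.
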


The  polytope $\F_{K_{n+1}}(1, 0, \ldots, 0, -1)$ (integrally equivalent to $CRY_n$) belongs to the polytope family in Theorem \ref{thm:abm}. Indeed,   Zeilberger \cite{Z} proved the $CRY_n$  volume 
formula by specializing Morris identity (stated in Lemma \ref{morrisID}),  while Theorem \ref{thm:abm} uses the whole strength of Morris identity.  
Similarly, we make use of a Morris-type identity proved in \cite{tesler} to prove Theorem \ref{111}. It is Theorem \ref{thm:2} that makes us work significantly:  
neither the Morris, nor the Morris-type identities mentioned above work, rather we   prove a new constant term identity to tackle it.

\medskip

The outline of the paper is as follows. In Section \ref{sec:def} we give  the necessary definitions on flow polytopes. In Section \ref{sec:2} we prove Theorem \ref{thm:2}. In Section \ref{sec:abm} we prove Theorems \ref{thm:abm} and in Section \ref{sec:ab} we prove Theorem \ref{111}.  Finally, in Section \ref{sec:struc} we enumerate the vertices of the polytopes $\F_{K_{n+1}}(1,1,0, \ldots, 0, -2)$ appearing in Theorem \ref{thm:2}.

\section{Flow polytopes $\F_G(\aa)$ and Kostant partition functions}
 \label{sec:def}

The exposition of this section follows that of \cite{mm}; see \cite{mm} for more details. 

Let $G$ be a (loopless) graph on the vertex set $[n+1]$ with $N$ edges. To each edge $(i, j)$, $i< j$,  of $G$,  associate the positive
type $A_{n}$ root $\vv(i,j)=\ee_i-\ee_j$,
where $\ee_i$ is the $i$th standard basis vector in
$\mathbb{R}^{n+1}$.   Let $S_G := \{\{\vvv_1, \ldots, \vvv_N\}\}$ be the
multiset of roots corresponding to the multiset of edges of $G$. Let  $M_G$ be the
$(n+1)\times N$ matrix whose columns are the vectors in $S_G$.  Fix an
integer  vector $\g=(a_1, \ldots, a_{n+1}) \in \Z^{n+1}$ which we
call the {\bf netflow} and for which we require that $a_{n+1}=-\sum_{i=1}^n a_i$.  An {\bf $\g$-flow} $\f_G$ on $G$ is a 
vector $\f_G=(b_k)_{k \in [N]}$, $b_k \in \R_{\geq 0}$  such that $M_G \f_G=\g$. That is, for all $1\leq i \leq n+1$, we have 

 \begin{equation} \label{eqn:flowA}
\sum_{e=(g<i) \in E(G)} b(e) + a_i = \sum_{e=(i<j) \in E(G)}
b(e)  \end{equation}

Define the {\bf flow polytope} $\F_G(\g)$ associated to a  graph $G$ on the vertex set $[n+1]$ and the integer vector $\aa=(a_1, \ldots, a_{n+1})$ as the set of all $\g$-flows $\f_G$ on $G$, i.e., $\F_G=\{\f_G \in \R^N_{\geq 0} \mid M_G \f_G = \g\}$. The flow polytope 
  $\F_G(\g)$ then naturally lives in $\mathbb{R}^{N}$, where $N$ is the number  of edges of $G$. The vertices of the flow polytope 
$\F_G(\g)$ are the $\g$-flows whose supports are acyclic subgraphs of $G$ \cite[Lemma 2.1]{hille}.

Recall that the {\bf Kostant partition function}  $K_G$ evaluated at the vector ${\bf b} \in \Z^{n+1}$ is defined as

\begin{equation} \label{kost} K_G({\bf b})= \#  \Big\{ (c_{k})_{k \in [N]}
  \Bigm\vert  \sum_{k \in [N]} c_{k}  \vvv_k ={\bf b} \textrm{ and } c_{k} \in \Z_{\geq 0} \Big\},\end{equation}

\noindent where $[N]=\{1,2,\ldots,N\}$. 

The generating series of the Kostant partition function is
\begin{equation}\label{Kgs}
\sum_{{\bf b} \in \mathbb{Z}^{n+1}} K_G({\bf b}){\bf x}^{\bf b} = {\prod_{(i,j) \in E(G)}} (1-x_ix_j^{-1})^{-1} ,
\end{equation}
where ${\bf x}^{\bf b}=x_1^{b_1}x_2^{b_2}\cdots x_{n+1}^{b_{n+1}}$. In particular, 
\begin{equation} \label{gs:kostant}
K_{K_{n+1}}({\bf b})= [{\bf x}^{\bf b}]
\prod_{1\leq i<j \leq n+1} (1-x_ix_j^{-1})^{-1}.
\end{equation}

Assume that ${\bf a}=(a_1,a_2,\ldots,a_n)$ satisfies $a_i \geq  0$ for
$i=1,\ldots,n$. Let $\aa'=(a_1,a_2,\ldots,a_n, -\sum_{i=1}^n a_i)$. The {\bf generalized Lidskii formulas} of Baldoni and Vergne  state that for a graph $G$ on the vertex set $[n+1]$ with $N$ edges we have
\begin{thm}\cite[Theorem 38]{BV2}
\begin{equation} \label{eq:vol}
\vol \F_G({\bf a'}) = \sum_{{\bf i}}
\binom{N-n}{i_1,i_2,\ldots,i_n} a_1^{i_1}\cdots
a_n^{i_n}\cdot K_{G'}(i_1-t_1^G, i_2-t_2^G,\ldots, i_n - t_n^G),
\end{equation}
and 
\begin{equation} \label{eq:kost}
K_{G}({\bf a'}) = \sum_{{{\bf i}}}
\binom{a_1+t_1^G}{i_1}\binom{a_2+t_2^G}{i_2} \cdots
\binom{a_{n}+t_n^G}{i_{n}} \cdot  K_{G'}(i_1-t_1^G, i_2-t_2^G,\ldots, i_n - t_n^G),
\end{equation}
where both sums are over weak compositions ${\bf
  i}=(i_1,i_2,\ldots,i_n)$ of $N-n$ with $n$ parts which we
denote as ${\bf i} \models N-n$,  $\ell({\bf i})=n$. The graph $G'$ is the restriction of $G$ to the vertex set $[n]$. The notation  $t_i^G$, $i \in [n]$, stands for the outdegree of vertex $i$ in $G$ minus $1$. 
\end{thm}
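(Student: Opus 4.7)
The plan is to apply the Baldoni--Vergne Lidskii formula \eqref{eq:vol} to rewrite the volume as a weighted sum of Kostant partition function values on a smaller graph, convert that sum into a constant-term expression, and then invoke the Morris-type identity from \cite{tesler} to evaluate it in closed form. First I record the combinatorial data of $G:=K_{n+1}^{a,b}$: the total edge count is $N=a\binom{n}{2}+bn$, so $N-n=(b-1)n+a\binom{n}{2}$ (which is already the argument of the factorial in the claimed formula); the outdegree of vertex $i\in[n]$ is $(n-i)a+b$, giving $t_i^G=(n-i)a+b-1$; and the restriction $G'$ of $G$ to $[n]$ is the complete graph $K_n$ with each edge of multiplicity $a$, whose Kostant generating function is $\prod_{1\le p<q\le n}(1-x_p/x_q)^{-a}$ by \eqref{gs:kostant}. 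Since the netflow $\aa=(1,\ldots,1,-n)$ makes the monomial $a_1^{i_1}\cdots a_n^{i_n}$ in \eqref{eq:vol} equal to $1$, the Lidskii formula collapses to
\[
\vol\F_G(\aa)=\sum_{{\bf i}\models N-n}\binom{N-n}{i_1,\ldots,i_n}\,K_{G'}({\bf i}-{\bf t}^G).
\]

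Next I will convert this sum into a constant term. Writing the multinomial coefficient as $[{\bf y}^{\bf i}](y_1+\cdots+y_n)^{N-n}$ and observing that $K_{G'}({\bf i}-{\bf t}^G)=[{\bf x}^{\bf i}]\,{\bf x}^{{\bf t}^G}\prod_{p<q}(1-x_p/x_q)^{-a}$, the bilinear identity $\sum_{\bf i}\bigl([{\bf y}^{\bf i}]F\bigr)\bigl([{\bf x}^{\bf i}]G\bigr)=\operatorname{CT}_{\bf x} F({\bf x})G({\bf x}^{-1})$ gives
\[
\vol\F_G(\aa)=\operatorname{CT}_{\bf x}\,(x_1+\cdots+x_n)^{N-n}\prod_{i=1}^n x_i^{-t_i^G}\prod_{1\le p<q\le n}(1-x_q/x_p)^{-a}.
\]
Using the elementary rewriting $(1-x_q/x_p)^{-a}=x_p^a(x_p-x_q)^{-a}$ and the fact that for fixed $p$ there are exactly $n-p$ values of $q>p$, the accumulated factor $\prod_p x_p^{(n-p)a}$ cancels the ``$(n-i)a$'' part of $\prod_i x_i^{-t_i^G}$, leaving the clean form
\[
\vol\F_G(\aa)=\operatorname{CT}_{\bf x}\,\frac{(x_1+\cdots+x_n)^{(b-1)n+a\binom{n}{2}}}{\prod_{i=1}^n x_i^{\,b-1}\cdot\prod_{1\le p<q\le n}(x_p-x_q)^{a}}.
\]

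Finally, I will recognize this as an instance of the Morris-type constant-term identity proved in \cite{tesler}, in which the distinguished linear form $x_1+\cdots+x_n$ raised to the ``correct'' power interacts with the Vandermonde-like denominator and the monomial weights $x_i^{b-1}$ to produce a Selberg-style Gamma product. Matching parameters (roughly, $\gamma\leftrightarrow a/2$ and one of the ``boundary'' parameters $\leftrightarrow b$) yields the evaluation, and rearranging the resulting Gamma-function product collects the factorial $\bigl((b-1)n+a\binom{n}{2}\bigr)!$ in front of the product $\prod_{i=0}^{n-1}\Gamma(1+a/2)/\bigl(\Gamma(1+(i+1)a/2)\Gamma(b+ia/2)\bigr)$ appearing in the statement. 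The first two steps are routine manipulations with generating functions; the main obstacle lies in this final step, namely correctly aligning our constant term with the normalization of the identity in \cite{tesler} and then carrying out the bookkeeping on Gamma-function ratios needed to reach the compact form of the theorem.
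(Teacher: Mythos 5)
Your proposal does not prove the statement at hand. The statement is the generalized Lidskii theorem of Baldoni--Vergne, namely the two identities \eqref{eq:vol} and \eqref{eq:kost} expressing $\vol\F_G(\aa')$ and $K_G(\aa')$ for an arbitrary graph $G$ in terms of Kostant partition function values $K_{G'}(i_1-t_1^G,\ldots,i_n-t_n^G)$ of the restricted graph. What you have written is instead a proof sketch of Theorem \ref{111}, the volume formula for the generalized Tesler polytopes $\F_{K_{n+1}^{a,b}}(1,1,\ldots,1,-n)$ --- it reproduces, essentially verbatim, the paper's Lemma \ref{lem:ab} (the conversion of the Lidskii sum into a constant term) followed by an appeal to the Morris-type identity of Lemma \ref{ale}. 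Crucially, your very first step is ``apply the Baldoni--Vergne Lidskii formula \eqref{eq:vol},'' i.e., you assume exactly the statement you were asked to prove. As an argument for \eqref{eq:vol} this is circular, and \eqref{eq:kost} is never addressed at all.

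Note also that the paper itself offers no proof of this theorem: it is quoted from \cite[Theorem~38]{BV2}. A genuine proof cannot be obtained by the generating-function manipulations in your sketch; it requires the Baldoni--Vergne machinery (or an equivalent combinatorial route), e.g., the iterated/total residue analysis of the cone of flows, or the subdivision of $\F_G(\aa')$ into cells indexed by weak compositions ${\bf i}\models N-n$, in which the number of top-dimensional pieces with prescribed left-degree data is counted by $K_{G'}(i_1-t_1^G,\ldots,i_n-t_n^G)$ and each piece is a product of simplices contributing $a_1^{i_1}\cdots a_n^{i_n}\binom{N-n}{i_1,\ldots,i_n}$ to the normalized volume, with the lattice-point analogue of the same decomposition yielding \eqref{eq:kost}. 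None of these ideas appears in your proposal, so the gap is total: the right statement is never engaged, and the one computation you do carry out (which is fine as far as it goes) belongs to the proof of a different theorem that sits downstream of this one.
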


The next three sections utilize the generalized Lidskii formulas. 

\section{A new Catalan polytope}
\label{sec:2}

In this section we prove Theorem \ref{thm:2}. Our methods rely on  \eqref{eq:vol}  and  constant term identities. 

For a Laurent series $f(x)$ in $x$, we denote the constant term by $CT_x f(x)$. We will also use the notation
\[
\CT_{x_n,\ldots,x_1} = \CT_{x_n}\cdots CT_{x_1}.
\]

We refer to the polytope of Theorem \ref{thm:2} as the ``Catalan polytope", since its volume involves Catalan numbers.  Our proof rests on the following two lemmas, whose proofs we provide after: 
\begin{lem} \label{lem1}
\[
\vol(\F_{K_{n+1}}(1,1,0,\dots,0,-2))
=\CT_{x_n,\ldots ,x_1} \frac{(x_n+x_{n-1})^{n\choose 2}}
{\prod_{1\le i<j\le n}(x_j-x_i)}.
\]  
\end{lem}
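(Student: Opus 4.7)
The plan is to apply the generalized Lidskii formula \eqref{eq:vol} together with the generating-function description \eqref{gs:kostant} of the Kostant partition function, and to exploit a natural reversal symmetry on $K_n$ to obtain the clean form stated in the lemma.

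First I would apply \eqref{eq:vol} to $G=K_{n+1}$ with $\mathbf{a}=(1,1,0,\ldots,0)$. Here $N-n=\binom{n}{2}$ and $t_j^{K_{n+1}}=n-j$ for $j\in[n]$, and since $a_j=0$ for $j\ge 3$ only compositions with $i_j=0$ for $j\ge 3$ contribute. The Lidskii sum therefore collapses to
\[
\vol\F_{K_{n+1}}(1,1,0,\ldots,0,-2)=\sum_{i_1+i_2=\binom{n}{2}}\binom{\binom{n}{2}}{i_1}K_{K_n}\!\bigl(i_1-(n-1),\,i_2-(n-2),\,-(n-3),\ldots,-1,\,0\bigr).
\]

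Next I would invoke the reversal symmetry $K_{K_n}(b_1,\ldots,b_n)=K_{K_n}(-b_n,\ldots,-b_1)$, which follows from the involution $c_{ij}\mapsto c_{n+1-j,\,n+1-i}$ on nonnegative integer solutions of $\sum_{1\le i<j\le n}c_{ij}(\ee_i-\ee_j)=\mathbf{b}$. This turns the Kostant argument into $(0,1,\ldots,n-3,\,(n-2)-i_2,\,(n-1)-i_1)$. I would then apply \eqref{gs:kostant}, interchange the finite sum with the iterated constant term, and evaluate the $i_1$-sum using the binomial identity
\[
\sum_{i_1+i_2=\binom{n}{2}}\binom{\binom{n}{2}}{i_1}x_{n-1}^{i_2-(n-2)}x_n^{i_1-(n-1)}=x_{n-1}^{-(n-2)}x_n^{-(n-1)}(x_{n-1}+x_n)^{\binom{n}{2}}.
\]

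Finally I would collect the monomial contributions. The exponents coming from $\mathbf{x}^{-\mathbf{b}}$ for the transformed Kostant arguments assemble into $\prod_{k=1}^n x_k^{-(k-1)}$, and the factorization $\prod_{1\le i<j\le n}(1-x_i/x_j)^{-1}=\prod_{j=2}^n x_j^{\,j-1}\big/\prod_{1\le i<j\le n}(x_j-x_i)$ contributes a compensating $\prod_{j=2}^n x_j^{\,j-1}$. These two monomial products cancel exactly, leaving only $(x_{n-1}+x_n)^{\binom{n}{2}}\big/\prod_{i<j}(x_j-x_i)$ inside $\CT_{x_n,\ldots,x_1}$, as required.

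The main obstacle is the reversal symmetry step: one must carefully verify that the involution $c_{ij}\mapsto c_{n+1-j,\,n+1-i}$ sends a nonnegative integer solution realizing $\mathbf{b}$ to one realizing the reversed-and-negated vector (by checking coordinate-by-coordinate that the flow conservation equations transform as claimed). Once this is in hand, the rest is an exponent tally; the cancellation into the clean form of the lemma is automatic but requires tracking in parallel the exponents on $x_1,\ldots,x_n$ coming from three distinct sources — the Kostant argument, the binomial sum, and the factorization of $\prod(1-x_i/x_j)^{-1}$.
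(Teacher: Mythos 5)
Your proposal is correct and follows essentially the same route as the paper's proof: the Lidskii formula \eqref{eq:vol} collapsed to compositions $(i_1,i_2,0,\ldots,0)$, the reversal symmetry of $K_{K_n}$, conversion to a constant term via \eqref{gs:kostant}, the factorization $\prod_{i<j}(1-x_ix_j^{-1})^{-1}={\bf x}^{\delta_n}\prod_{i<j}(x_j-x_i)^{-1}$ cancelling ${\bf x}^{-\delta_n}$, and the binomial theorem. The only (welcome) difference is that you justify the reversal symmetry by the explicit involution $c_{ij}\mapsto c_{n+1-j,\,n+1-i}$, which the paper merely asserts as \eqref{prop:revflow}.
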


\begin{lem} \label{lem2}
\[
\CT_{x_n,\ldots ,x_1} \frac{(x_n+x_{n-1})^{n\choose 2}}
{\prod_{1\le i<j\le n}(x_j-x_i)}=2^{\binom n2 -1} \prod_{k=1}^{n-2} \Cat(k).
\]  
\end{lem}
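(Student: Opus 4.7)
The plan is to expand $(x_n+x_{n-1})^{\binom{n}{2}}$ via the binomial theorem and interpret each summand as a Kostant partition function value, then match the resulting sum to the $CRY_n$ evaluation (scaled by the expected power of $2$) via a new constant-term identity. Using \eqref{gs:kostant} together with the factorization $\prod_{1\le i<j\le n}(x_j-x_i)^{-1} = (x_2 x_3^2 \cdots x_n^{n-1})^{-1} \prod_{i<j}(1-x_i/x_j)^{-1}$ and comparing exponents shows
\[
\CT_{x_n,\ldots,x_1} \frac{(x_n+x_{n-1})^{\binom{n}{2}}}{\prod_{i<j}(x_j-x_i)} = \sum_{k=0}^{\binom{n-1}{2}} \binom{\binom{n}{2}}{k}\, K_{K_n}(\mathbf{b}_k),
\]
where $\mathbf{b}_k=(0,1,2,\ldots,n-3,\, n-2-k,\, k-\binom{n-1}{2})$; only $0\le k\le \binom{n-1}{2}$ contribute since $(\mathbf{b}_k)_n$ must be $\le 0$ (vertex $n$ has no outgoing edges in $K_n$).

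On the other side, the Lidskii formula \eqref{eq:vol} applied to $CRY_n$ combined with Zeilberger's theorem gives $\prod_{k=1}^{n-2}\Cat(k) = K_{K_n}\bigl(\binom{n-1}{2},\, -(n-2),\, \ldots,\, -1,\, 0\bigr)$. Hence Lemma~\ref{lem2} is equivalent to the identity
\[
\sum_{k=0}^{\binom{n-1}{2}} \binom{\binom{n}{2}}{k}\, K_{K_n}(\mathbf{b}_k) = 2^{\binom{n}{2}-1}\, K_{K_n}\bigl(\tbinom{n-1}{2},\, -(n-2),\, \ldots,\, -1,\, 0\bigr),
\]
mixing two quite different Kostant evaluations by binomial weights. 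This identity is not covered by Morris's identity or its known variants (Baldoni--Vergne, etc.), and is presumably the ``new constant term identity'' announced in the introduction.

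To prove the identity I would attempt induction on $n$, with trivial base case $n=2$. The induction step proceeds by first extracting $\CT_{x_1}$: since $x_1$ appears only in the denominator factors $(x_j-x_1)^{-1}$, this effectively sets $x_1=0$ and reduces the problem to $\CT_{x_n,\ldots,x_2}\frac{(x_n+x_{n-1})^{\binom{n}{2}}}{x_2 \cdots x_n \cdot \prod_{2\le i<j\le n}(x_j-x_i)}$. Then apply the algebraic rewriting $x_n+x_{n-1}=2x_n-(x_n-x_{n-1})$ to the numerator: the leading expansion term provides the $2^{\binom{n}{2}}$ prefactor, while the subleading terms carry factors of $(x_n-x_{n-1})^\ell$ that cancel against the corresponding Vandermonde factor and reduce to lower-complexity constant terms amenable to the inductive hypothesis. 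The main obstacle is organizing these subleading contributions so as to produce a net extra factor of exactly $1/2$ (so that $2^{\binom{n}{2}}\cdot\tfrac12 = 2^{\binom{n}{2}-1}$) times the $CRY_n$ constant term; absent a Morris-style closed form, this will likely require either a creative-telescoping/WZ-style recurrence in $n$ or a careful partial-fractions manipulation exploiting the $S_{n-2}$-symmetry of the integrand in $x_1,\ldots,x_{n-2}$.
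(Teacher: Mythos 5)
Your setup is correct as far as it goes: extracting $\CT_{x_1}$ does set $x_1=0$, the translation of the constant term into the binomially weighted sum $\sum_k \binom{\binom n2}{k} K_{K_n}(\mathbf b_k)$ is right (it just reverses the derivation of Lemma~\ref{lem1}), and so is the identification of the right-hand side with $2^{\binom n2 -1}$ times the $CRY_n$ evaluation. But at that point you have only restated the lemma, and the proof stops exactly where the actual work begins. The factor of $\tfrac12$ that you flag as ``the main obstacle'' \emph{is} the content of the lemma, and deferring it to ``creative telescoping or a careful partial-fractions manipulation'' leaves the central claim unproved. Moreover, the specific induction you sketch does not connect with the inductive hypothesis: the hypothesis concerns $(x_{n-1}+x_{n-2})^{\binom{n-1}2}$ in $n-1$ variables, while your rewriting $x_n+x_{n-1}=2x_n-(x_n-x_{n-1})$ produces a leading term $2^{\binom n2}x_n^{\binom n2}\prod_{i<j}(x_j-x_i)^{-1}$ whose $\CT_{x_n}$ is a sum over compositions of $\binom{n-1}2$ of monomials $\prod_i x_i^{k_i}$ against the smaller Vandermonde --- not the $(n-1)$-variable left-hand side --- and the subleading terms $(x_n-x_{n-1})^{\ell-1}$ proliferate in the same way. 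There is no visible mechanism by which these contributions reorganize into the claimed recurrence.

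For comparison, the paper resolves precisely this difficulty combinatorially rather than by recursion. Lemma~\ref{lem:expand} interprets $\prod_{i<j}(x_j-x_i)^{-1}$ and $\prod_i(1-x_i)^{-2}$ as generating functions for upper triangular matrices in $\umat nn$ recorded by their hook sums, so that the constant terms on both sides become matrix counts. Lemma~\ref{lem:gen} then proves a more general identity by an explicit bijection $\phi:X\uplus X'\to Y\times\{0,1,\dots,\binom n2-a\}$, where $X,X'$ encode the left-hand constant term (the two copies coming from a symmetrization over the last two rows, which is exactly where the factor $2$, hence the net $\tfrac12$, originates) and $Y$ encodes the $(n-2)$-variable $CRY$-type constant term; under $\phi$ the binomial weights sum freely to $2^{\binom n2-a}$, giving $2L=2^{\binom n2-a}|Y|$. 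Setting all $a_i=0$ and invoking Zeilberger's evaluation \eqref{eq:cry} for $|Y|$ finishes the proof. Some such structural idea --- a bijection or an exact symmetrization argument that makes every binomial coefficient $\binom{\binom n2}{t}$ appear exactly once --- is what your proposal is missing.
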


Recall Theorem \ref{thm:2}:

\begin{theorem1*} {\it The normalized volume of the flow polytope $\F_{K_{n+1}}(1,1,0, \ldots, 0, -2)$ is }

  \begin{equation*} \vol \F_{K_{n+1}}(1,1,0, \ldots, 0, -2)= 2^{{n
        \choose 2}-1}\prod_{k=1}^{n-2} \Cat(k).\end{equation*} 
 \end{theorem1*}

\proof  Immediate from Lemmas \ref{lem1} and \ref{lem2}.
\qed

\subsection{Proving Lemma \ref{lem1}.}

Wenow show how to express the volume as a constant term identity.

\noindent \textit{Proof of Lemma \ref{lem1}.} First note that \begin{equation} \label{prop:revflow}
K_{K_{n+1}}(a_1,a_2,\ldots,a_n,-\sum_{i=1}^n a_i) = K_{K_{n+1}}(\sum_{i=1}^n
a_i, -a_n,\ldots,-a_2,-a_1). \end{equation}

By \eqref{eq:vol} and  \eqref{prop:revflow} we have that
\begin{align*}
& \vol\F_{K_{n+1}}(1,1,0, \ldots, 0, -2)\\  &= \sum_{{\bf i} \models \binom{n}{2},
  {\bf i}=(i_1,i_2,0,\ldots ,0)} \binom{\binom{n}{2}}{i_1,i_2}
\cdot K_{K_{n}}(i_1-n+1, i_2-n+2,-n+3, -n+4, \ldots,0)\\
&= \sum_{i_1+i_2={n\choose 2}} \binom{\binom{n}{2}}{i_1,i_2}
\cdot K_{K_{n}}(0,1,2,\ldots,n-4, n-3, n-2-i_2, n-1-i_{1}).
\end{align*}

We use \eqref{gs:kostant} to rewrite this as 
\[
\vol\F_{K_{n+1}}(1,1,0, \ldots, 0, -2) 
= \sum_{i_1+i_2=\binom n2} 
\binom{\binom{n}{2}}{i_1,i_2}  [{\bf
  x}^{\delta_n}x_{n-1}^{-i_2}x_n^{-i_1}] \prod_{1\leq i<j\leq n}
(1-x_ix_j^{-1})^{-1},
\]
where $\delta_n = (0,1,2,\ldots,n-1)$. Since $[{\bf x}^{\bf a}]
f = CT_{x_n,\ldots ,x_1} {\bf x}^{-{\bf a}} f$, we have
\begin{multline*}
\vol\F_{K_{n+1}}(1,1,0, \ldots, 0, -2) \\
= CT_{x_n,\ldots ,x_1} \sum_{i_1+i_2=\binom n2} {\bf x}^{-\delta_n}x_{n-1}^{i_2}x_n^{i_1}
\binom{\binom{n}{2}}{i_1,i_2} 
\prod_{1\leq i<j\leq n}
(1-x_ix_j^{-1})^{-1}.
\end{multline*}
Using $\displaystyle \prod_{1\leq i<j\leq n}
(1-x_ix_j^{-1})^{-1} = {\bf x}^{\delta_n} \prod_{1\leq i<j\leq n}
(x_j-x_i)^{-1}$ we get
\[
\vol\F_{K_{n+1}}(1,1,0, \ldots, 0, -2) = CT_{x_n,\ldots ,x_1}  \prod_{1\leq i<j\leq n}
(x_j-x_i)^{-1} \sum_{i_1+i_2=\binom n2} \binom{\binom{n}{2}}{i_1,i_2} x_{n-1}^{i_2}x_n^{i_1}.
\]
An application of the binomial theorem yields the desired result.
\qed

\subsection{Proof of Lemma~\ref{lem2}}

We need a few results before the proof  Lemma~\ref{lem2}.
The following identity was used in  \cite{Z} to prove
the volume formula for $CRY_n$:

\begin{equation}
  \label{eq:cry}
\CT_{x_{n-2},\ldots ,x_1}\prod_{j=1}^{n-2} (1-x_j)^{-2}
\prod_{1\le j<k\le  n-2} (x_k-x_j)^{-1} 
=\prod_{k=1}^{n-2} \Cat(k).
\end{equation}

Equation \eqref{eq:cry} is a special case of the Morris identity stated in Lemma \ref{morrisID}. 
We relate the constant term in Lemma~\ref{lem2} to that in 
\eqref{eq:cry}. To this end we give a combinatorial meaning to the
constant terms using matrices.

Let $\mat nm$ denote the set of $n\times m$ matrices with nonnegative
integer entries. We say that $A\in \mat nm$ is \textbf{upper triangular}
if $A_{i,j} = 0$ whenever $i>j$.  We denote by $\umat nm$ the set of
upper triangular matrices $A\in\mat nm$ with diagonal entries given by
$A_{i,i}=i-1$ for $i=1,2,\dots,\min(n,m)$.

 For $A\in \mat nm$ and an integer $k\ge1$, we define
\textbf{the $k$th row sum}
\[
r_k(A) = \sum_{i=1}^m A_{k,i}
\]
and \textbf{the $k$th hook sum}
\[
h_k(A) = \sum_{i=k+1}^m A_{k,i} - \sum_{j=1}^k A_{j,k}.
\]

For example if $m=n=4$, let $A$ be the matrix
\[
A=\left( \begin{matrix}
4 & 2 & 5 & 7\\
0 & 1 & 2 & 3\\
0 & 0 & 1 & 8\\
0 & 0 & 0 & 3
\end{matrix}\right).
\]
This gives $r_2(A)=0+1+2+3=6$, $h_2(A)=2+3-1-2=3$, $r_3(A)=9$ and $h_3(A)=0$.

For two variables $x_i$ and $x_j$ with $i<j$, we regard
$1/(x_j-x_i)$ as the Laurent series in $x_i$ and $x_j$ given by
\[
\frac{1}{x_j-x_i} = \frac{1}{x_j(1-x_i/x_j)}
=x_j^{-1}\sum_{k=0}^\infty x_i^kx_j^{-k}. 
\]

\begin{lem}\label{lem:expand}
For nonnegative integers $b$ and $m$, we have
\[
\prod_{i=1}^n (1-x_i)^{-b}\prod_{1\le i<j\le n} (x_j-x_i)^{-m}
= \sum_{\substack{A^{(1)},\dots,A^{(m)}\in\umat nn\\B \in \mat nm}}
\prod_{i=1}^n x_i ^{\sum_{j=1}^m h_i(A^{(i)}) + r_i(B)}.
\]
In particular, when $m=1$, we have
\[
\prod_{i=1}^n (1-x_i)^{-b}\prod_{1\le i<j\le n} (x_j-x_i)^{-1}
= \sum_{A\in\umat n{(n+m)}} x_1^{h_1(A)}  \dots x_n^{h_n(A)}.
\]
\end{lem}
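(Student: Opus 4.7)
The plan is to prove the identity by expanding each factor on the left-hand side as a geometric series and encoding the choices of exponents as entries of matrices in $\umat nn$ and $\mat nm$. The key observation is that the definitions of $h_k$ and $r_k$, together with the diagonal normalization $A_{i,i}=i-1$ in $\umat nn$, are engineered precisely to absorb the negative powers of $x_j$ appearing from $(x_j-x_i)^{-1}=x_j^{-1}\sum_{k\ge 0}(x_i/x_j)^k$.

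First I would handle a single factor $\prod_{1\le i<j\le n}(x_j-x_i)^{-1}$. Expanding each factor as above, a term in the product is determined by a choice of nonnegative integer $k_{i,j}$ for each pair $i<j$, contributing $x_i^{k_{i,j}}x_j^{-k_{i,j}-1}$. The exponent of $x_\ell$ in the resulting monomial is $\sum_{j>\ell}k_{\ell,j}-\sum_{i<\ell}k_{i,\ell}-(\ell-1)$. I would then package these choices into a matrix $A\in\umat nn$ by $A_{i,j}=k_{i,j}$ for $i<j$, and check that the forced diagonal $A_{\ell,\ell}=\ell-1$ makes the exponent of $x_\ell$ equal to $h_\ell(A)=\sum_{j>\ell}A_{\ell,j}-\sum_{j\le\ell}A_{j,\ell}$. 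Raising to the $m$th power then introduces $m$ independent choices, giving the tuple $(A^{(1)},\dots,A^{(m)})\in(\umat nn)^m$ with total $x_\ell$-exponent $\sum_{j=1}^m h_\ell(A^{(j)})$.

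Next I would treat $\prod_{i=1}^n(1-x_i)^{-m}$ similarly. Writing $(1-x_i)^{-1}=\sum_{k\ge 0}x_i^k$ and taking the $m$-fold product, each $i$ gets $m$ independent exponent choices, which I encode as the row $(B_{i,1},\dots,B_{i,m})$ of a matrix $B\in\mat nm$. The exponent of $x_i$ is then exactly $r_i(B)$. Multiplying the two expansions yields the first displayed identity; the proof is essentially bookkeeping, so the only subtlety is to confirm the diagonal convention in $\umat nn$ correctly accounts for the $-(\ell-1)$ shift coming from the $x_j^{-1}$ prefactors.

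For the $m=1$ specialization, I would give a bijection between pairs $(A^{(1)},B)$ with $A^{(1)}\in\umat nn$ and $B\in\mat n1$ (a single column $(b_1,\dots,b_n)^T$) and matrices $\tilde A\in\umat n{n+1}$, by appending the column $B$ as column $n+1$ of $A^{(1)}$. A direct check shows that $h_i(\tilde A)=h_i(A^{(1)})+b_i$, since the added column contributes $b_i=\tilde A_{i,n+1}$ to the positive part of the hook sum and nothing to the negative part. This identifies the two sides of the $m=1$ case. The main obstacle, such as it is, is purely notational: keeping straight which indices are summed over in $h_k$ versus $r_k$ and ensuring the diagonal term $A_{k,k}=k-1$ is included exactly once in the negative part of $h_k(A)$.
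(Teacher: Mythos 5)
Your proof is correct and takes essentially the same route as the paper, whose own proof simply asserts the two geometric-series expansions $\prod_{i=1}^n(1-x_i)^{-b}=\sum_{B\in\mat nb}\prod_i x_i^{r_i(B)}$ and $\prod_{i<j}(x_j-x_i)^{-1}=\sum_{A\in\umat nn}\prod_i x_i^{h_i(A)}$ and multiplies them; your write-up just makes explicit the exponent bookkeeping (the $-(\ell-1)$ shift absorbed by the diagonal convention) and the column-appending bijection behind the specialization. One caveat: the statement's $B\in\mat nm$ and $\umat n{(n+m)}$ are typos for $B\in\mat nb$ and $\umat n{(n+b)}$ (as the paper's later application with $b=2$, where the relevant set is $\umat{(n-2)}{n}$, confirms), so where you wrote $(1-x_i)^{-m}$ and appended a single column you should instead work with the exponent $b$ and append all $b$ columns of $B$, each contributing only to the positive part of the hook sums --- the argument goes through verbatim.
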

\begin{proof}
This follows immediately from the expansions
\[
\prod_{i=1}^n (1-x_i)^{-b}
= \sum_{A\in\mat nb} x_1^{r_1(A)}  \dots x_n^{r_n(A)},
\]
\[
\prod_{1\le i<j\le n} \frac{1}{x_j-x_i}
= \sum_{A\in\umat nn} x_1^{h_1(A)}  \dots x_n^{h_n(A)}.
\]
\end{proof}

The following is the main lemma in this subsection. 

\begin{lem} \label{lem:gen}
  Suppose that $n$ is a nonnegative integer and $a, a_1,\dots,a_n$ are
  any integers with $a_1+a_2+\dots+a_n=a$.
Then 
  \begin{multline*}
CT_{x_n,\ldots,x_1} (x_{n-1}+x_n)^{\binom n2 - a}
(x_{n-1}^{a_{n-1}}x_n^{a_n} +x_{n-1}^{a_{n}}x_n^{a_{n-1}})
\prod_{i=1}^{n-2} x_i^{a_i}\prod_{1\le i<j\le n} (x_j-x_i)^{-1} \\
=2^{\binom n2-a}CT_{x_{n-2},\ldots,x_1} 
\prod_{i=1}^{n-2} x_i^{a_i}(1-x_i)^{-2} \prod_{1\le i<j\le n-2} (x_j-x_i)^{-1}.
  \end{multline*}
\end{lem}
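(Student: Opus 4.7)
My approach is to evaluate $\CT_{x_n}$ and $\CT_{x_{n-1}}$ of the LHS integrand explicitly via partial fractions, reducing the statement to an identity among $\CT_{u_{n-2},\ldots,u_1}$ of rational functions in $u_i:=x_i$ for $i\le n-2$. Writing $y:=x_{n-1}$, $z:=x_n$, and $f(y,z):=(y+z)^e(y^{a_{n-1}}z^{a_n}+y^{a_n}z^{a_{n-1}})$ with $e:=\binom{n}{2}-a$, the LHS integrand factors as
\[
f(y,z)\cdot\frac{1}{(z-y)\prod_{i=1}^{n-2}(z-u_i)(y-u_i)}\cdot\prod_i u_i^{a_i}\prod_{1\le i<j\le n-2}(u_j-u_i)^{-1}.
\]

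First I apply the partial-fraction decomposition
\[
\frac{1}{(z-y)\prod_i(z-u_i)} = \frac{\prod_i(y-u_i)^{-1}}{z-y}+\sum_{i=1}^{n-2}\frac{(u_i-y)^{-1}\prod_{j\ne i}(u_i-u_j)^{-1}}{z-u_i}
\]
together with the elementary identity
\[
\CT_z\frac{h(z)}{z-w} = w^{-1}\bigl[h(w)-h(0)\bigr],
\]
valid for any polynomial $h(z)$ and derived from $(z-w)^{-1}=z^{-1}\sum_k(w/z)^k$. Assuming $a_{n-1},a_n\ge 1$ so that $f(y,0)=0$, this yields $\CT_z[f(y,z)/(z-w)]=w^{-1}f(y,w)$; the ``diagonal'' pole $w=y$ then contributes $y^{-1}f(y,y)=2^{e+1}y^{e+a_{n-1}+a_n-1}$, while each ``off-diagonal'' pole $w=u_i$ contributes $u_i^{-1}(u_i+y)^e(y^{a_{n-1}}u_i^{a_n}+y^{a_n}u_i^{a_{n-1}})$. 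I then perform $\CT_y$ by an analogous partial-fraction expansion of $\prod_i(y-u_i)^{-1}$ combined with the same key identity; the double-pole factor $\prod_i(y-u_i)^{-2}$ produced by the diagonal term is handled either by direct Laurent expansion in $y$ using $(y-u_i)^{-2}=\sum_{k\ge 0}(k+1)u_i^k y^{-k-2}$, or equivalently by writing $(y-u_i)^{-2}=\partial_{u_i}(y-u_i)^{-1}$.

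The main obstacle is the final identification after $\CT_z\CT_y$: I must show that the resulting sum of diagonal and off-diagonal contributions -- a rational function in $u_1,\ldots,u_{n-2}$ -- coincides, upon applying the remaining $\CT_{u_{n-2},\ldots,u_1}$, with $2^e\CT\bigl[\prod_i u_i^{a_i}(1-u_i)^{-2}\prod_{i<j}(u_j-u_i)^{-1}\bigr]$. The factor $2^e$ emerges transparently from the diagonal contribution via $f(y,y)=2^{e+1}y^{\bullet}$, but producing the $\prod_i(1-u_i)^{-2}$ structure on the RHS requires a nontrivial cancellation between the various off-diagonal residues, which I anticipate coming from a Lagrange-interpolation or symmetric-function identity. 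Finally, the edge cases $a_{n-1}=0$ or $a_n=0$ introduce the correction term $-w^{-1}f(y,0)$ in the key identity; tracking these extra pieces through the subsequent computation and verifying that they cancel or combine appropriately is an additional technical wrinkle.
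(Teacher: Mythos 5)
Your proposal is a framework, not a proof: the step you yourself call ``the main obstacle''---identifying the sum of diagonal and off-diagonal residue contributions with $2^{\binom n2-a}$ times the right-hand side---is exactly where the content of the lemma lies, and you leave it as an anticipation rather than an argument. Worse, the cancellation you would need is more violent than your write-up suggests. Keep your notation $u_i=x_i$, $y=x_{n-1}$, $e=\binom n2-a$, write $s=\sum_{i=1}^{n-2}a_i$, and assume $a_{n-1},a_n\ge1$ as you do. Since $\prod_{1\le i<j\le n-2}(u_j-u_i)^{-1}$ is homogeneous of total degree $-\binom{n-2}2$ and the constant term of a homogeneous Laurent series of nonzero degree vanishes, expanding $(1-u_i)^{-2}=\sum_{k\ge0}(k+1)u_i^k$ shows that the right-hand side equals $2^e\sum\prod_i(k_i+1)\,\CT_{u}\bigl[\prod_i u_i^{a_i+k_i}\prod_{i<j}(u_j-u_i)^{-1}\bigr]$, where the sum runs over $k_i\ge0$ with $\sum_i k_i=\binom{n-2}2-s$. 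Now apply $\CT_y$ to your diagonal term $2^{e+1}y^{e+a_{n-1}+a_n-1}\prod_i(y-u_i)^{-2}$: using $(y-u_i)^{-2}=\sum_{k\ge0}(k+1)u_i^ky^{-k-2}$, it produces the same kind of sum restricted to $\sum_ik_i=e+a_{n-1}+a_n-1-2(n-2)$, and this number equals $\binom{n-2}2-s$ identically. So your diagonal contribution is exactly $2^{e+1}$ times the same restricted sum, i.e.\ exactly \emph{twice} the right-hand side, and therefore the off-diagonal residues must sum to exactly \emph{minus} the right-hand side. The factor $2^e$ does not ``emerge transparently'' from the diagonal term, and the off-diagonal terms are not a correction to be absorbed: they carry half of the identity, with signs, and no Lagrange-interpolation or symmetric-function identity establishing this is stated or proved. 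A second, smaller gap: the lemma allows arbitrary integers $a_1,\dots,a_n$, but your key identity $\CT_z[h(z)/(z-w)]=w^{-1}(h(w)-h(0))$ needs $h$ to be a polynomial; when $a_{n-1}$ or $a_n$ is negative (or $e<0$) the correction term is different, since $\CT_z[z^{-m}/(z-w)]=0$ for all $m\ge0$, and you do not track this case at all.

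For contrast, the paper's proof never computes residues. It takes the constant terms in $x_1,\dots,x_{n-2}$ first and invokes Lemma~\ref{lem:expand} to convert both sides into counts of upper-triangular nonnegative-integer matrices with prescribed hook sums; the left-hand side becomes $\sum_t\binom{\binom n2-a}{t}\bigl(|X_t|+|X_t'|\bigr)$, and an explicit bijection $X\uplus X'\to Y\times\{0,1,\dots,\binom n2-a\}$---built by reconstructing the last two rows of a matrix, with a case analysis showing that exactly one of two complementary inequalities holds---yields $2L=2^{\binom n2-a}|Y|$, the power of $2$ arising as $\sum_t\binom{\binom n2-a}{t}$ rather than from any diagonal evaluation. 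If you want to complete your analytic route, the identity you still owe is essentially equivalent to that bijection; as it stands, your argument stops before the decisive step.
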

\begin{proof}
Let $L$ be the left hand
side. Then
\begin{multline*}
L = CT_{x_n}CT_{x_{n-1}}(x_{n-1}+x_n)^{\binom n2 - a}
(x_{n-1}^{a_{n-1}}x_n^{a_n} +x_{n-1}^{a_{n}}x_n^{a_{n-1}}) \\
\times CT_{x_{n-2},\ldots ,x_1} 
\prod_{i=1}^{n-2} x_i^{a_i}\prod_{1\le i<j\le n} (x_j-x_i)^{-1} .
\end{multline*}
By Lemma~\ref{lem:expand},
\[
CT_{x_{n-2},\ldots ,x_1} 
\prod_{i=1}^{n-2} x_i^{a_i}\prod_{1\le i<j\le n} (x_j-x_i)^{-1} 
= \sum_{A\in T} x_{n-1}^{h_{n-1}(A)}  x_{n}^{h_{n}(A)},
\]
where 
\[
T = \{A\in\umat nn: h_i(A) = -a_i \mbox{ for } i=1,2,\dots,n-2\}.
\]
  Thus
  \begin{multline*}
L = CT_{x_n}CT_{x_{n-1}} \sum_{t=0}^{\binom n2 - a}
\binom{\binom n2 - a}{t} x_{n-1}^t x_n^{\binom n2 -a -t}
(x_{n-1}^{a_{n-1}}x_n^{a_n} +x_{n-1}^{a_{n}}x_n^{a_{n-1}})\\
\times \sum_{A\in T} x_{n-1}^{h_{n-1}(A)}  x_{n}^{h_{n}(A)},
  \end{multline*}
and we get
  \begin{equation}
\label{eq:L}
L= \sum_{t=0}^{\binom n2 -a}
\binom{\binom n2-a}{t} (|X_t|+|X_t'|), 
  \end{equation}
  where $X_t$ (respectively $X_t'$) is the set of matrices
  $A\in\umat nn$ such that $h_i(A) = -a_i$ for $i=1,2,\dots,n-2$, and
  $t+a_{n-1}+h_{n-1}(A)=0$ and $\binom n2 -a -t + a_n+h_n(A) =0$
  (respectively $t+a_{n}+h_{n-1}(A)=0$ and
  $\binom n2 -a -t + a_{n-1}+h_n(A) =0$). Since every matrix
  $A\in\umat nn$ satisfies $h_1(A) + \cdots + h_n(A) = -\binom n2$, we
  can omit the condition on $h_n(A)$. Therefore we can rewrite $X_t$
  and $X_t'$ as
  \begin{align*}
X_t  &= \{A\in\umat nn: h_i(A) = -a_i \mbox{ for } i=1,2,\dots,n-2, 
h_{n-1}(A) = -a_{n-1}-t\},\\
X_t'  &= \{A\in\umat nn: h_i(A) = -a_i \mbox{ for } i=1,2,\dots,n-2, 
h_{n-1}(A) = -a_{n}-t\}.
  \end{align*}

Putting 
\[
X = \bigcup_{t=0}^{\binom n2 -a} X_t, \quad
X' = \bigcup_{t=0}^{\binom n2 -a} X_t',
\]
we can rewrite \eqref{eq:L} as follows:
\begin{equation}
  \label{eq:2L}
2L = \sum_{A\in X} \binom{\binom n2 -a}{-a_{n-1}-h_{n-1}(A)}
+\sum_{A\in X'} \binom{\binom n2 -a}{-a_{n}-h_{n-1}(A)}
\end{equation}

Let 
\[
Y = \{ B\in \umat{(n-2)}{n}: h_i(A) = -a_i \mbox{ for } i=1,2,\dots,n-2 \}.
\]
Then
\[
|Y| = CT_{x_{n-2},\dots ,x_1} 
\prod_{i=1}^{n-2} x_i^{a_i} (1-x_i)^{-2} \prod_{1\le i<j\le n-2} (x_j-x_i)^{-1}.
\]

We claim that there is a bijection
$\phi:X\uplus X' \to Y\times\{0,1,\dots,\binom n2 -a\}$ such that if
$\phi(A) = (B,t)$ for $A\in X$ then $-a_{n-1}-h_{n-1}(A) =t$ or
$-a_{n-1}-h_{n-1}(A) =\binom n2-a-t$ and if $\phi(A) = (B,t)$ for
$A\in X'$ then $-a_{n}-h_{n-1}(A) =t$ or
$-a_{n}-h_{n-1}(A) =\binom n2-a-t$.  Applying this bijection to
\eqref{eq:2L} we get
\[
2L =\sum_{(B,t)\in Y\times \{0,1,\dots,\binom n2-a\}} 
\binom{\binom n2 -a}{t},
\]
which is equal to $2^{\binom n2-a} |Y|$.  Thus it is now sufficient
to find such a bijection.

We define the map
$\phi:X\uplus X' \to Y\times\{0,1,\dots,\binom n2-a\}$ by
$\phi(A) = (B,t)$ for $A\in X$ and $\phi(A) = (B',\binom n2-a - t)$
for $A\in X'$, where $t=-a_{n-1}-h_{n-1}(A)$, $B$ is the matrix obtained from
$A$ by removing the last two rows, and $B'$ is the matrix obtained
from $B$ by exchanging the last two columns.

Let $(B,t)\in Y\times\{0,1,\dots,\binom n2-a\}$.  In order to show
that $\phi$ is a bijection, we must show that there is a unique
element $A\in X\uplus X'$ such that $\phi(A) = (B,t)$.  Let
$c_i$ be the sum of entries in the $i$th column of $B$ for
$i=n-1,n$. Then we have
\[
h_1(B) + \cdots + h_{n-2}(B) = -0-1-\cdots-(n-3) +c_{n-1}+c_n =
-\sum_{i=1}^{n-2} a_i.
\]
Thus 
\begin{equation}
  \label{eq:2}
c_{n-1}+c_n = \binom{n-2}2 -\sum_{i=1}^{n-2} a_i.   
\end{equation}
We now consider the following two cases.

{\bf Case 1:} There is a matrix $A\in X$ such that
$\phi(A) = (B,t)$. In this case,
$h_{n-1}(A) = -c_{n-1}-(n-2)+A_{n-1,n} = -a_{n-1}-t$. Thus $A$ is uniquely
determined by $A_{n-1,n} = c_{n-1}+(n-2)-a_{n-1}-t$ and such a matrix $A$
exists if and only if
\begin{equation}
  \label{eq:1}
c_{n-1}+(n-2)-a_{n-1}-t \ge 0.  
\end{equation}

{\bf Case 2:} There is a matrix $A\in X'$ such that
$\phi(A) = (B,t)$. In this case,
$h_{n-1}(A) = -c_{n}-(n-2)+A_{n-1,n} = -a_{n-1}-(\binom n2-a-t)$. Thus $A$
is uniquely determined by
$A_{n-1,n} = c_{n}+(n-2)-a_{n-1}-(\binom n2-a-t)$ and such a matrix $A$
exists if and only if
\[
c_{n}+(n-2)-a_{n-1}-\left(\binom n2-a-t\right) \ge 0.
\]
Using~\eqref{eq:2}, one can check that the above inequality is
equivalent to
\begin{equation}
  \label{eq:3}
c_{n-1}+(n-1)-a_{n-1}-t \le 0.  
\end{equation}

For any integers $n,a,t$, exactly one of \eqref{eq:1} and \eqref{eq:3}
holds. Thus there is a unique element $A\in X\uplus X'$ such
that $\phi(A) = (B,t)$. This finishes the proof. 
\end{proof}

We now have all ingredients to prove Lemma~\ref{lem2}. 

\begin{proof}[Proof of Lemma~\ref{lem2}]
  If $a_i=0$ for all $i=1,2,\dots,n$ in Lemma~\ref{lem:gen}, we have

  \begin{multline*}
CT_{x_n,\ldots,x_1} (x_{n-1}+x_n)^{\binom n2} \cdot 2 \cdot 
\prod_{i=1}^{n-2} \prod_{1\le i<j\le n} (x_j-x_i)^{-1} \\
=2^{\binom n2}CT_{x_{n-2},\ldots,x_1} 
\prod_{i=1}^{n-2} (1-x_i)^{-2} \prod_{1\le i<j\le n-2} (x_j-x_i)^{-1}.
  \end{multline*}
By \eqref{eq:cry} we obtain the desired identity.
\end{proof}

\section{Morris polytopes}
\label{sec:abm}

We refer to the polytopes of Theorem \ref{thm:abm} as the  ``Morris polytopes",  as their volume formulas are byproducts of the Morris identity.  This section is devoted to proving Theorem \ref{thm:abm}, which we achieve in a sequence of lemmas.

\begin{lem}[Morris Identity \cite{Z}] \label{morrisID}
For positive integers $n$, $a$, and $b$, and  $m$, let 

\[
C(n,a,b,m) = 
CT_{x_n,\ldots ,x_1} \prod_{i=1}^n x_i^{-a}  (1-x_i)^{-b} 
\prod_{1\le i<j\le  n}(x_j-x_i)^{-m}.
\]

Then 
\begin{equation}
  \label{eq:zeilberger}
C(n,a,b,m)= \frac{1}{n!} \prod_{j=0}^{n-1} \frac{\Gamma(a+b+(n-1+j)m/2)\Gamma(m/2)}
{\Gamma(b+jm/2)\Gamma(m/2+jm/2)\Gamma(a+jm/2+1)}.
\end{equation}
\end{lem}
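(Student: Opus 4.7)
The plan is to prove this Morris constant term identity by reducing it to Selberg's integral, which is the route Morris originally took.

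First, I would interpret the left-hand side as a multidimensional contour integral. Under the paper's Laurent-series convention, $(x_j - x_i)^{-m}$ is expanded as $x_j^{-m}(1-x_i/x_j)^{-m}$, which converges for $|x_i|<|x_j|$. Accordingly, the iterated constant-term operator $\CT_{x_n,\ldots,x_1}$ is realized as the iterated residue along nested circles of radii $0<r_1<\cdots<r_n<1$:
\[
C(n,a,b,m) = \frac{1}{(2\pi i)^n}\oint_{|x_1|=r_1}\!\!\cdots\oint_{|x_n|=r_n}\prod_{i=1}^n x_i^{-a-1}(1-x_i)^{-b}\prod_{1\le i<j\le n}(x_j-x_i)^{-m}\,dx_1\cdots dx_n.
\]

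Next, I would symmetrize by averaging over the $n!$ orderings of the radii. The integrand picks up a sign $\mathrm{sgn}(\sigma)^m$ under a permutation $\sigma$ of the variables, so after averaging I obtain a symmetric integrand featuring $|\Delta(x)|^{-2m}$ on a symmetric polycycle (with a tracked global sign). Deforming this polycycle onto the real domain $[0,1]^n$, justified by analytic continuation in the parameters from a region where everything converges absolutely, identifies the constant term with Selberg's integral
\[
S_n(\alpha,\beta,\gamma) = \int_{[0,1]^n}\prod_{i=1}^n t_i^{\alpha-1}(1-t_i)^{\beta-1}\prod_{i<j}(t_i-t_j)^{2\gamma}\,dt_1\cdots dt_n,
\]
with parameters matched as $\alpha = 1-a$, $\beta = 1-b$, $\gamma = -m/2$ (up to standard shifts coming from the Jacobian and measure). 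I would then invoke Selberg's classical evaluation
\[
S_n(\alpha,\beta,\gamma) = \prod_{j=0}^{n-1}\frac{\Gamma(\alpha+j\gamma)\Gamma(\beta+j\gamma)\Gamma(1+(j+1)\gamma)}{\Gamma(\alpha+\beta+(n+j-1)\gamma)\Gamma(1+\gamma)}
\]
and simplify the Gamma-function product using reflection and duplication to recover the right-hand side of \eqref{eq:zeilberger}.

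The principal obstacle is the symmetrization and contour-deformation step: one must carefully track the signs produced by the Vandermonde factor $\Delta(x)^m$ and justify the passage between the nested-circle polycycle and the symmetric real domain, which demands analytic continuation in $m$, $a$, $b$ from a small convergent strip. A more elementary alternative that avoids these analytic subtleties is Aomoto's functional-equation method: apply $\CT_{x_n,\ldots,x_1}$ to a cleverly chosen total derivative of the form $\sum_i \partial_{x_i}(x_i F)$ with $F$ the integrand, derive from it a first-order recurrence in $a$ (or $b$), induct on $n$, and verify that the claimed product formula satisfies the same recurrence with matching base case $C(1,a,b,m) = \Gamma(a+b)/(\Gamma(b)\Gamma(a+1)) = \binom{a+b-1}{a}$. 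Either route concludes the proof.
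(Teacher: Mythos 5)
You cannot really be compared against an in-paper argument here: the paper gives no proof of Lemma \ref{morrisID}, which it imports from the literature (citing Zeilberger \cite{Z}, who relies on Morris's thesis \cite{WM}), so your proposal has to stand on its own. Its main route has a genuine gap. The opening reduction is fine: under the paper's expansion convention the iterated constant term is an iterated integral over nested circles $r_1<\cdots<r_n$, and since the non-Vandermonde factors are symmetric, permuting the radii only multiplies the value by $\mathrm{sgn}(\sigma)^m$ (so your claim that averaging produces $|\Delta(x)|^{-2m}$ is garbled --- the power of the Vandermonde never doubles; that modulus-squared form arises for the positive-exponent torus integrand $\prod_{i\ne j}(1-x_i/x_j)^k$, not here). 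The fatal step is ``deforming this polycycle onto $[0,1]^n$, justified by analytic continuation in the parameters from a region where everything converges absolutely'': for the parameters of the lemma there is no such region. Matching exponents gives $\alpha=-a$ (or $1-a$ after your shift), $\beta=1-b$, $\gamma=-m/2$, whereas Selberg's integral converges only for $\operatorname{Re}\alpha>0$, $\operatorname{Re}\beta>0$, and $\operatorname{Re}\gamma>-\min\bigl\{1/n,\operatorname{Re}\alpha/(n-1),\operatorname{Re}\beta/(n-1)\bigr\}$; with $a,b,m\ge1$ and $n\ge2$ all three target parameters lie outside this domain, and indeed the real integrand behaves like $|t_i-t_j|^{-m}$ near the diagonal and $t_i^{-a-1}$ near $0$, both non-integrable. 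So the integral you want to deform onto diverges, and correspondingly the naive substitution $\gamma=-m/2$ into Selberg's product hits poles of $\Gamma(1+(j+1)\gamma)$; the finite answer \eqref{eq:zeilberger} emerges only after pole--zero cancellations, i.e., the analytic continuation you invoke is precisely the thing that must be constructed (classically via Pochhammer double-loop or Dotsenko--Fateev-type complex Selberg contours, not by pushing the cycle to the real cube). The difficulty is visible already at $n=1$: $CT_{x}\,x^{-a}(1-x)^{-b}=\binom{a+b-1}{a}$ is finite while $\int_0^1 t^{-a-1}(1-t)^{-b}\,dt$ diverges, and their reconciliation through $\Gamma(a+b)/(\Gamma(a+1)\Gamma(b))$ is a continuation statement, not a contour deformation. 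An added subtlety: the left side is defined (as an iterated Laurent expansion) only at integer $m$, so even a correct continuation in $m$ would need a Carlson-type uniqueness argument with growth bounds, which the sketch does not supply.

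Your fallback --- Aomoto-style functional equations --- is the sounder instinct: algebraic recurrences adapted to iterated Laurent series are in fact how such negative-parameter Morris-type constant term identities are established in practice (compare the analogous identity \cite[Lemma 3.5]{tesler}, quoted in this paper as Lemma \ref{ale}), and your base case $C(1,a,b,m)=\binom{a+b-1}{a}$ does match \eqref{eq:zeilberger} at $n=1$. But as written it is a one-sentence plan: the ``cleverly chosen total derivative'' is unspecified, and the standard Aomoto manipulation exploits symmetry of the integration domain and vanishing boundary terms, both of which must be re-justified in the asymmetric nested-contour setting where, as noted above, reordering the variables changes the constant term by a sign when $m$ is odd. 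So neither route, as proposed, constitutes a proof: the Selberg deformation fails at the step indicated, and the recurrence route is plausible but unexecuted.
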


Recall that  $K_{n+1}^{a,b,m}$ is the graph on the vertex set $[n+1]$ with each edge $(1,i)$, $i \in [2,n]$, appearing $a$ times, 
edge $(i,n+1)$, $i \in [2,n]$, appearing $b$ times,  and $(i,j)$, $1<i<j<n+1$ appearing $m$ times.  We apply the following unpublished result of Postnikov and Stanley to $\F_{K_{n+1}^{a,b,m}}(1,0, \ldots, 0, -1)$. We note that their theorem can be seen as a special case of a version of the generalized Lidskii formulas.

\begin{thm}\cite{BV2, mm} \label{ps}
For a graph $G$ on the vertex set $[n]$, with $d_i=(\text{indegree of }i)-1$, we have
\[\vol\left(\F_{G}(1,0, \ldots, 0, -1) \right)=K_{G}(0,d_2, \ldots, d_{n-1}, -\sum_{i=2}^{n-1} d_i).\]  
\end{thm}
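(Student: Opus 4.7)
The plan is to deduce the statement from the generalized Lidskii formula \eqref{eq:vol} by applying it after a source--sink duality on $G$. Let $G^*$ denote the graph on $[n]$ obtained from $G$ by reversing every edge and then relabeling vertex $i\mapsto n+1-i$, so that edges of $G^*$ again point from smaller to larger. The coordinate permutation that sends the flow on edge $(i,j)$ of $G$ to the flow on edge $(n+1-j,n+1-i)$ of $G^*$ is a volume-preserving isomorphism $\F_G(1,0,\ldots,0,-1)\cong\F_{G^*}(1,0,\ldots,0,-1)$, so the two polytopes have the same normalized volume.

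I would then apply \eqref{eq:vol} to $\F_{G^*}(1,0,\ldots,0,-1)$. With $\mathbf{a}=(1,0,\ldots,0)$ only the composition $\mathbf{i}=(N-n+1,0,\ldots,0)$ survives. Since $\operatorname{outdeg}_{G^*}(i)=\operatorname{indeg}_G(n+1-i)$, we have $t_i^{G^*}=d_{n+1-i}$, and the identity $N=\sum_{i=2}^{n}(d_i+1)$ gives $N-n+1-t_1^{G^*}=\sum_{i=2}^{n-1}d_i$. Hence
\[
\vol\F_G(1,0,\ldots,0,-1)=K_{(G^*)'}\!\left(\textstyle\sum_{i=2}^{n-1}d_i,\,-d_{n-1},\,-d_{n-2},\,\ldots,\,-d_2\right),
\]
where $(G^*)'$ is the restriction of $G^*$ to $[n-1]$.

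A direct check from the definitions shows that $(G^*)'$ is the same graph as the reverse-relabeling $H^*$ of $H:=G\setminus\{1\}$ (the subgraph of $G$ induced on $\{2,\ldots,n\}$, relabeled as $[n-1]$). Renaming the summation variables $c_{(i,j)}\leftrightarrow c_{(n-j,\,n-i)}$ in \eqref{kost} yields the elementary identity $K_{H^*}(b_1,\ldots,b_{n-1})=K_H(-b_{n-1},\ldots,-b_1)$, which converts the previous display into $K_{G\setminus\{1\}}\bigl(d_2,\ldots,d_{n-1},-\sum_{i=2}^{n-1}d_i\bigr)$. Finally, since every edge at vertex $1$ of $G$ is outgoing, the $0$ in the first coordinate of the netflow $(0,d_2,\ldots,d_{n-1},-\sum_{i=2}^{n-1}d_i)$ forces $c_{(1,j)}=0$ for every $(1,j)\in E(G)$, so $K_G(0,d_2,\ldots,d_{n-1},-\sum_{i=2}^{n-1}d_i)=K_{G\setminus\{1\}}(d_2,\ldots,d_{n-1},-\sum_{i=2}^{n-1}d_i)$. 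Chaining these equalities proves the theorem.

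I expect the main difficulty to be not a deep technical point but rather the careful bookkeeping of two overlaid operations---the edge-reversal $G\mapsto G^*$ and the deletion of the source vertex---together with tracking which vertex-relabeling turns each indegree $d_i$ of $G$ into the corresponding outdegree datum $t_i^{G^*}$ of $G^*$. Once these bijections are pinned down, the Postnikov--Stanley formula drops out as a direct corollary of the Lidskii formula \eqref{eq:vol}.
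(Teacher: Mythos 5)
Your proof is correct, and it follows exactly the route the paper itself indicates: the paper states this theorem without proof, citing \cite{BV2, mm} and remarking that it ``can be seen as a special case of a version of the generalized Lidskii formulas,'' which is precisely your derivation from \eqref{eq:vol}. Your bookkeeping checks out ($t_i^{G^*}=d_{n+1-i}$, $N-n+1=\sum_{i=2}^{n}d_i$, the identity $K_{H^*}(b_1,\ldots,b_{n-1})=K_H(-b_{n-1},\ldots,-b_1)$, and the deletion of the zero-netflow source), and indeed the reversal identity and the source-deletion step you use appear verbatim elsewhere in the paper, as \eqref{prop:revflow} and as the reduction from $K_{K_{n+1}^{a,b,m}}({\bf v})$ to $K_{K_n^{b,m}}({\bf w})$ in the proof of Lemma \ref{?}.
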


\begin{lem} \label{?}
For positive integers $n$, $a$, and $b$, and  $m$, we have
\begin{multline*}
\vol \F_{K_{n+1}^{a,b,m}}(1,0, \ldots, 0, -1)\\
=\CT_{x_n}\CT_{x_{n-1}}\cdots \CT_{x_1} \prod_{i=1}^{n-1} x_i^{-a+1}  (1-x_i)^{-b} 
\prod_{1\le i<j\le  n-1}(x_j-x_i)^{-m}.
\end{multline*}
\end{lem}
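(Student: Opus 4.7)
The plan is a two-step reduction: first apply Theorem~\ref{ps} to rewrite the volume as a single Kostant partition function evaluation, then use the generating series formula \eqref{Kgs} to turn that Kostant partition function into a constant term, and finally massage the result into Morris form by an index shift and a Vandermonde-style rewrite.

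For the first step, the indegree of vertex $i$ in $K_{n+1}^{a,b,m}$ for $i\in[2,n]$ is $a+m(i-2)$ (receiving $a$ copies from vertex $1$ and $m$ copies from each of $2,\ldots,i-1$), so $d_i=a+m(i-2)-1$. Setting $D=\sum_{i=2}^n d_i$, Theorem~\ref{ps} gives
\[
\vol\F_{K_{n+1}^{a,b,m}}(1,0,\ldots,0,-1) = K_{K_{n+1}^{a,b,m}}(0,d_2,\ldots,d_n,-D).
\]
The generating series identity \eqref{Kgs}, applied to the edge multiset of $K_{n+1}^{a,b,m}$ together with the fact that the series is supported on zero-sum weight vectors (so substituting $x_{n+1}=1$ preserves the relevant coefficient), rewrites this as
\[
[x_1^0 x_2^{d_2}\cdots x_n^{d_n}]\prod_{i=2}^{n}(1-x_1/x_i)^{-a}\prod_{i=2}^{n}(1-x_i)^{-b}\prod_{2\le i<j\le n}(1-x_i/x_j)^{-m}.
\]
Each factor $(1-x_1/x_i)^{-a}$ expands in nonnegative powers of $x_1$, so extracting $[x_1^0]$ collapses the first product to $1$; the parameter $a$ then survives only through the $d_i$.

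Reindex $y_i=x_{i+1}$ for $1\le i\le n-1$ and rewrite the coefficient extraction as a constant term to obtain
\[
\CT_{y_{n-1},\ldots,y_1}\prod_{i=1}^{n-1} y_i^{-d_{i+1}}(1-y_i)^{-b}\prod_{1\le i<j\le n-1}(1-y_i/y_j)^{-m}.
\]
The key bookkeeping step is to apply the identity $(1-y_i/y_j)^{-m}=y_j^m(y_j-y_i)^{-m}$ throughout the last product: this contributes an extra factor $\prod_{j=1}^{n-1}y_j^{m(j-1)}$ and converts the double product to $\prod_{1\le i<j\le n-1}(y_j-y_i)^{-m}$. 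Combining with the existing $y_i^{-d_{i+1}}$ yields exponent $-d_{i+1}+m(i-1) = -(a+m(i-1)-1)+m(i-1) = 1-a$ on each $y_i$. Renaming $y_i\to x_i$, and observing that $\CT_{x_n}$ acts trivially on an integrand independent of $x_n$, produces the right-hand side of the lemma. I anticipate no serious obstacle: the only delicate point is the index shift after eliminating $x_1$, engineered so that the Vandermonde contribution $m(i-1)$ exactly cancels the matching component of $d_{i+1}$, leaving the $x_i^{-a+1}$ required by the Morris form.
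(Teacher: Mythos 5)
Your proposal is correct and follows essentially the same route as the paper: both apply Theorem~\ref{ps} to express the volume as a Kostant partition function, pass to the generating series \eqref{Kgs}, eliminate vertex $1$ (you do it by extracting $[x_1^0]$ after expanding; the paper does it at the level of Kostant partition functions by restricting the graph, which is the same observation), set the last variable to $1$ by zero-sum homogeneity, and convert $(1-x_i/x_j)^{-m}$ to Vandermonde form so that the monomial shifts $m(i-1)$ cancel against the $d_{i+1}$, leaving $x_i^{-a+1}$. The exponent bookkeeping in your argument matches the paper's computation exactly.
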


\proof Denote by $K_n^{m, b}$ the restriction of $K_{n+1}^{a,b,m}$ to the vertex set $[2,n+1]$. Let 
\begin{multline*}
{\bf v}= \left(0, a-1, a-1+m, a-1+2m, \ldots,a-1+(n-2)m,\right. \\
\left. -(n-1)(a-1)-{{n-1} \choose 2}m \right)  
\end{multline*}
and 
\begin{multline*}
{\bf w}= \left(a-1, a-1+m, a-1+2m, \ldots,a-1+(n-2)m, \right.\\
\left. -(n-1)(a-1)-{{n-1} \choose 2}m \right).  
\end{multline*}
Also let $${\bf x}^{\bf w}=x_1^{a-1}x_2^{a-1+m}\cdots x_{n-1}^{a-1+(n-2)m}x_n^{-(n-1)(a-1)-{{n-1} \choose 2}m}$$ and  $${\bf \tilde{x}}^{\bf \tilde{w}}=x_1^{a-1}x_2^{a-1+m}\cdots x_{n-1}^{a-1+(n-2)m}$$  Then by  Theorem \ref{ps} we have that 
\begin{align*}
&\vol \F_{K_{n+1}^{a,b,m}}(1,0, \ldots, 0, -1)  = K_{K_{n+1}^{a,b,m}}({\bf v} )
= K_{K_{n}^{b,m}}({\bf w})\\
&=[{\bf x}^{\bf w}] \prod_{i=1}^{n-1} (1-x_i{x_n}^{-1})^{-b} \prod_{1\leq i<j\leq n-1}(1-x_i{x_j}^{-1})^{-m}\\
&=[{\bf \tilde{x}}^{\bf \tilde{w}}]  \prod_{i=1}^{n-1} (1-x_i)^{-b}x_i^{(i-1)m} \prod_{1\leq i<j\leq n-1}(x_j-x_i)^{-m}\\
&=\CTn{n-1}  \prod_{i=1}^{n-1} (1-x_i)^{-b}x_i^{-a+1} \prod_{1\leq i<j\leq n-1}(x_j-x_i)^{-m}.
\end{align*}
\qed

Theorem \ref{thm:abm} states that
\begin{equation*} \vol \F_{K_{n+1}^{a,b,m}}(1,0, \ldots, 0, -1)= 
\frac{1}{(n-1)!} \prod_{j=0}^{n-2} \frac{\Gamma(a-1+b+(n-2+j)\frac{m}{2}) \Gamma( \frac{m}{2} )}{\Gamma(a+j\frac{m}{2})\Gamma(b+j\frac{m}{2}) \Gamma(\frac{m}{2}+j\frac{m}{2})}. \end{equation*}
Its proof is immediate from Lemmas \ref{morrisID} and  \ref{?}.

\section{Generalizations of the Tesler polytope}
\label{sec:ab}
 
 In this section we study generalizations of the Tesler polytope $\F_{K_{n+1}}(1, \ldots, 1, -n)$ which was introduced and studied in \cite{tesler}. It is proved in \cite{tesler} that normalized volume of  $\F_{K_{n+1}}(1, \ldots, 1, -n)$ equals
\begin{align}
\vol\F_{K_{n+1}}(1, \ldots, 1, -n) =  \frac{\binom{n}{2}!\cdot
  2^{\binom{n}{2}}}{\prod_{i=1}^n i!} = |\SYT_{(n-1,n-2,\ldots,1)}| \cdot
\prod_{i=0}^{n-1} \Cat(i), \label{eq:volformula}
\end{align}
where $\Cat(i) = \frac{1}{i+1}\binom{2i}{i}$ is the $i\textsuperscript{th}$ Catalan number
and $|\SYT_{(n-1,n-2,\ldots,1)}|$ is the number of Standard Young Tableaux of
staircase shape $(n-1,n-2,\ldots,1)$. 

Denote by $K_{n+1}^{a,b}$ the graph on the vertex set $[n+1]$ with edges $(i,j)$,  $1\leq i<j\leq n$, appearing with multiplicity $a$ and the edges $(i,n+1)$, $ i \in [n]$, appearing with multiplicity $b$. Our objective in this section is to calculate the volumes of $\F_{K_{n+1}^{a,b}}(1,1, \ldots, 1, -n)$. The Tesler polytope is a special case when we set $a=b=1$.
 
\begin{lem} \label{lem:ab}
For $n\ge2$, and nonnegative integers $a$, and $b$, 
\begin{multline*}
\vol\F_{K_{n+1}^{a,b}}(1,1, \ldots, 1, -n) \\=\CT_{x_n,\ldots ,x_1}  (x_1+\cdots+x_n)^{{{n} \choose 2}a+n(b-1)} \prod_{i=1}^n x_i^{-b+1} \prod_{1\leq i<j\leq n}
(x_j-x_i)^{-a}. 
\end{multline*}
\end{lem}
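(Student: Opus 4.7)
The plan is to apply the generalized Lidskii formula \eqref{eq:vol} to $G = K_{n+1}^{a,b}$ with all $a_j=1$ and then convert the resulting finite sum into the claimed constant term. First I would read off the combinatorial data of $G$: the number of edges is $N=\binom{n}{2}a+bn$, so $N-n=\binom{n}{2}a+n(b-1)$ (matching the exponent in the lemma); the restriction $G'$ to $[n]$ is $K_n^a$, the complete graph on $[n]$ with every edge of multiplicity $a$; and for $i\in[n]$ the outdegree of vertex $i$ in $G$ is $a(n-i)+b$, giving $t_i^G = a(n-i)+b-1$. Lidskii then yields
\[
\vol\F_G(1,\ldots,1,-n) = \sum_{{\bf i}\models N-n}\binom{N-n}{\bf i} K_{K_n^a}(i_1-t_1,\ldots,i_n-t_n).
\]

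The key maneuver is next: I would invoke the reversal symmetry
\[
K_{K_n^a}(c_1,\ldots,c_n) = K_{K_n^a}(-c_n,\ldots,-c_1),
\]
which arises from the edge involution $(i,j)\leftrightarrow (n+1-j,n+1-i)$ of $K_n^a$ (each multi-edge of multiplicity $a$ maps to another of multiplicity $a$). Applying this term-by-term and reindexing via $j_k:=i_{n+1-k}$ transforms the sum into
\[
\vol = \sum_{{\bf j}\models N-n}\binom{N-n}{\bf j} K_{K_n^a}(\tau_1-j_1,\ldots,\tau_n-j_n),\quad \tau_k:=t_{n+1-k}=a(k-1)+b-1.
\]
The purpose of this step is that the sign pattern on ${\bf j}$ is now a ``$+j_k$'', which is what one needs to later produce $(x_1+\cdots+x_n)^{N-n}$ rather than $(x_1^{-1}+\cdots+x_n^{-1})^{N-n}$.

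To finish, I would substitute the generating function \eqref{gs:kostant}, written as $K_{K_n^a}({\bf c}) = \CT_{x_n,\ldots,x_1}{\bf x}^{-{\bf c}}\prod_{1\le i<j\le n}(1-x_ix_j^{-1})^{-a}$, and interchange the sum over ${\bf j}$ with the constant term. The inner sum collapses to $(x_1+\cdots+x_n)^{N-n}$ by the multinomial theorem, leaving
\[
\vol = \CT_{x_n,\ldots,x_1}(x_1+\cdots+x_n)^{N-n}\prod_{k=1}^n x_k^{-\tau_k}\prod_{1\le i<j\le n}(1-x_ix_j^{-1})^{-a}.
\]
Rewriting $\prod_{i<j}(1-x_ix_j^{-1})^{-a} = \prod_{j=1}^n x_j^{a(j-1)}\prod_{i<j}(x_j-x_i)^{-a}$ (in the $x_i/x_j$-expansion convention of Lemma~\ref{lem:expand}), and observing that $-\tau_k + a(k-1) = -(b-1) = 1-b$, the extra monomial factor collapses to $\prod_k x_k^{1-b}$, which is precisely the right-hand side of Lemma~\ref{lem:ab}.

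The main obstacles I anticipate are bookkeeping rather than substantive: verifying the reversal symmetry of $K_{K_n^a}$ rigorously via the explicit edge involution, and ensuring that all the formal Laurent series manipulations (in particular the interchange of summation and constant term) are carried out consistently in the single expansion convention ``positive powers of $x_i/x_j$ for $i<j$'' used throughout the paper.
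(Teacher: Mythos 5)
Your proposal is correct and follows essentially the same route as the paper: apply the Lidskii formula \eqref{eq:vol} with $t_i = a(n-i)+b-1$, use the reversal--negation symmetry of $K_{K_n^a}$ (which the paper applies without comment, analogous to its \eqref{prop:revflow}), pass to the constant-term form of \eqref{gs:kostant}, rewrite $\prod_{i<j}(1-x_ix_j^{-1})^{-a}={\bf x}^{a\delta_n}\prod_{i<j}(x_j-x_i)^{-a}$, and collapse the sum by the multinomial theorem. Your exponent bookkeeping ($-\tau_k+a(k-1)=1-b$) matches the paper's computation exactly, so no further changes are needed.
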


\proof
We apply \eqref{eq:vol} to $\F_{K_{n+1}^{a,b}}(1,1, \ldots, 1, -n)$. Denote by $K_n^{a}$ the restriction of  
$K_{n+1}^{a,b}$ to the vertex set $[n]$. Note that  $K_n^{a}$ is the complete graph on the vertex set $[n]$ with each edge 
appearing with multiplicity $a$. For $K_{n+1}^{a,b}$ we have $N={{n} \choose 2}a+nb$ and $r=n$ in \eqref{eq:vol}. Moreover, 
$t_1=(n-1)a+b-1, t_2=(n-2)a+b-1, t_3=(n-3)a+b-1, \ldots, t_{n-1}=a+b-1, t_n=b-1$. By \eqref{eq:vol} we obtain
\begin{align*} 
&\vol \F_{K_{n+1}^{a,b}}(1,1, \ldots, 1, -n) \\& = \sum_{{\bf i} \models N-n, \ell({\bf i})=n}
\binom{N-n}{i_1,i_2,\ldots,i_n} K_{K_n^{a}}(i_1-t_1, i_2-t_2,\ldots, i_n - t_n), \\
&=  \sum_{{\bf i} \models N-n, \ell({\bf i})=n}
\binom{N-n}{i_1,i_2,\ldots,i_n} K_{K_n^{a}}(t_n-i_n, t_{n-1}-i_{n-1},\ldots, t_1 - i_1).
\end{align*}

We use \eqref{gs:kostant} to rewrite this as 
\[
\vol\F_{K_{n+1}^{a,b}}(1,1, \ldots, 1, -n) 
= \sum_{{\bf i} \models N-n, \ell({\bf i})=n}
\binom{N-n}{i_1,i_2,\ldots,i_n} [{\bf
  x}^{{\bf t} - {\bf i}}]  \prod_{1\leq i<j\leq n}
(1-x_ix_j^{-1})^{-a},
\]
where ${\bf t} = (t_n, \ldots,t_1)$ and ${\bf i}=(i_n,\ldots, i_1)$. Since $[{\bf x}^{\bf a}]
f = \CTn{n} {\bf x}^{-{\bf a}} f$ then
\begin{multline*}
\vol\F_{K_{n+1}^{a,b}}(1,1, \ldots, 1, -n)  \\= \CTn{n} \sum_{{\bf i} \models N-n, \ell({\bf i})=n}
\binom{N-n}{i_1,i_2,\ldots,i_n} {\bf
  x}^{{\bf i} - {\bf t}}  \prod_{1\leq i<j\leq n}
(1-x_ix_j^{-1})^{-a}.  
\end{multline*}

Note that ${\bf t}=a\delta_n+(b-1, \ldots, b-1, b-1)$,  where $\delta_n = (0,1,2\ldots,n-1)$. 

Using $\displaystyle \prod_{1\leq i<j\leq n}
(1-x_ix_j^{-1})^{-a} = {\bf x}^{a\delta_n} \prod_{1\leq i<j\leq n}
(x_j-x_i)^{-a}$ we get
\begin{multline*}
\vol\F_{K_{n+1}^{a,b}}(1,1, \ldots, 1, -n)  \\= \CTn{n} \sum_{{\bf i} \models N-n, \ell({\bf i})=n}
\binom{N-n}{i_1,i_2,\ldots,i_n} {\bf
  x}^{{\bf i} - {(b-1, \ldots, b-1)}}  \prod_{1\leq i<j\leq n}
(x_j-x_i)^{-a}. 
\end{multline*}
An application of the multinomial theorem yields the desired result.

\qed

\begin{lem} \cite[Lemma 3.5]{tesler} \label{ale} For $n\geq 2$ and nonnegative integers
$a,b$ we have that
\[
\CTn{n} \,\, (x_1+\cdots+x_n)^{(b-1)n+ a\binom{n}{2}}  \prod_{i=1}^{n} x_i^{-b+1} \prod_{1\leq
  i < j \leq n} (x_i-x_j)^{-a}= 
  \]
\[  =\big((b-1)n+a{\textstyle\binom{n}{2}}\big)!
\prod_{i=0}^{n-1} \frac{\Gamma(1+a/2)}{\Gamma(1+(i+1)a/2)\Gamma(b+ia/2)}.
\]
\end{lem}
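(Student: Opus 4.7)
The plan is to derive Lemma \ref{ale} as the $N\to\infty$ limit of the Morris identity \eqref{eq:zeilberger}, using the classical fact that $(1-\lambda x/N)^{-N}\to e^{\lambda x}$ to convert the factorizable Selberg weight $\prod(1-x_i)^{-b}$ into the symmetric weight $e^{\lambda(x_1+\cdots+x_n)}$ from which $(x_1+\cdots+x_n)^M$ arises by Taylor expansion. Set $M:=(b-1)n+a\binom{n}{2}$ and introduce the generating function
$$\tilde{C}(n,b,\lambda,a)\;:=\;\CTn{n}\,e^{\lambda(x_1+\cdots+x_n)}\prod_{i=1}^{n}x_i^{1-b}\prod_{1\le i<j\le n}(x_j-x_i)^{-a}.$$
Every monomial in $\prod x_i^{1-b}\prod(x_j-x_i)^{-a}$ has total degree $-M$, so expanding $e^{\lambda s}=\sum_{M'\ge 0}\lambda^{M'}s^{M'}/M'!$ and applying $\CTn{n}$ leaves only the $M'=M$ term, giving $\tilde{C}(n,b,\lambda,a)=(\lambda^M/M!)\cdot L$ (up to the sign $(-1)^{a\binom{n}{2}}$ reconciling $(x_i-x_j)^{-a}$ with $(x_j-x_i)^{-a}$). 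It suffices to evaluate $\tilde C$.

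In the Morris identity specialize $b\mapsto N$ and $A:=b-1$, then apply the diagonal rescaling $x_i\mapsto\lambda x_i/N$; since the constant term is invariant under such rescalings one finds
$C(n,b-1,N,a) = (\lambda/N)^{-M}\,\CTn{n}\prod x_i^{1-b}(1-\lambda x_i/N)^{-N}\prod(x_j-x_i)^{-a}.$
As $N\to\infty$, $(1-\lambda x_i/N)^{-N}\to e^{\lambda x_i}$ coefficient-by-coefficient in $\lambda$ and $x_i$ (each $\lambda^k x_i^k$-coefficient is a polynomial in $1/N$ tending to $1/k!$), and total-degree homogeneity guarantees that only finitely many $x$-monomials contribute to the $\lambda^{M'}$-constant-term. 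Hence $\lim_N$ commutes with $\CTn{n}$, yielding $\tilde C(n,b,\lambda,a)=\lim_N(\lambda/N)^M\,C(n,b-1,N,a).$ On the Morris right-hand side, Stirling's asymptotic $\Gamma(N+c)\sim\Gamma(N)N^c$ makes the quotient $\Gamma((b-1)+N+(n-1+j)a/2)/\Gamma(N+ja/2)$ asymptotic to $N^{(b-1)+(n-1)a/2}$ (independent of $j$), whose product over $j=0,\ldots,n-1$ is exactly $N^M$ and cancels the rescaling factor. The surviving $\lambda^M$-coefficient is $(1/n!)\prod_{j=0}^{n-1}\Gamma(a/2)/[\Gamma((j+1)a/2)\Gamma(b+ja/2)]$; multiplying by $M!$ and applying $\Gamma(a/2)/\Gamma((j+1)a/2)=(j+1)\Gamma(1+a/2)/\Gamma(1+(j+1)a/2)$ (so that $\prod_{j=0}^{n-1}(j+1)=n!$ cancels the $1/n!$) produces the product formula on the RHS of Lemma \ref{ale}.

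The main obstacle is rigorously justifying the interchange of $\lim_{N\to\infty}$ with both $\CTn{n}$ and the $\lambda$-expansion. Coefficient-by-coefficient convergence in $1/N$ is elementary, but one must confirm that for each fixed $\lambda^{M'}$ only finitely many $x$-monomials contribute to the constant term; this reduces to the total-degree-$-M$ homogeneity noted in the first paragraph, which bounds the supports that can survive against $\prod x_i^{1-b}\prod(x_j-x_i)^{-a}$. Secondary routine checks are the Stirling asymptotic yielding a $j$-independent exponent and the sign bookkeeping via $(-1)^{a\binom{n}{2}}$ between the two Vandermonde conventions, both of which can be handled directly.
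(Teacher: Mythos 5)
Your argument is essentially correct, but note that it cannot be compared against ``the paper's own proof'' because the paper gives none: Lemma \ref{ale} is quoted from \cite[Lemma 3.5]{tesler}, where it is proved by adapting Zeilberger's inductive proof \cite{Z} of the Morris identity to the deformed kernel $(x_1+\cdots+x_n)^M$. Your route is genuinely different: you deduce the lemma from the Morris identity \eqref{eq:zeilberger} itself by a scaling limit. The steps check out. Homogeneity (every monomial of $\prod_i x_i^{1-b}\prod_{i<j}(x_j-x_i)^{-a}$ has total degree $-M$, $M=(b-1)n+a\binom n2$) correctly isolates the $\lambda^M$ term of $e^{\lambda(x_1+\cdots+x_n)}$; after the rescaling $x_i\mapsto \lambda x_i/N$ the coefficient of $\lambda^M$ is a \emph{finite} sum over compositions $k_1+\cdots+k_n=M$ with $N$-dependence confined to $\prod_i\binom{N+k_i-1}{k_i}N^{-k_i}\to\prod_i 1/k_i!$, so the interchange of $\lim_{N\to\infty}$ with $\CTn{n}$ is unproblematic; and on the Morris side $\Gamma(N+b-1+(n-1+j)a/2)/\Gamma(N+ja/2)\sim N^{\,b-1+(n-1)a/2}$ has a $j$-independent exponent summing to $M$, after which $\Gamma(a/2)/\Gamma((j+1)a/2)=(j+1)\,\Gamma(1+a/2)/\Gamma(1+(j+1)a/2)$ and $\prod_{j=0}^{n-1}(j+1)=n!$ yield exactly the stated product. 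What your approach buys is self-containedness: Theorem \ref{111} would then rest only on Lemma \ref{morrisID}, already quoted in the paper, plus elementary asymptotics, instead of an external Zeilberger-style induction; what the approach of \cite{tesler} buys is independence from the Morris identity and no limit/interchange bookkeeping.

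Two caveats, neither fatal. First, your sign remark is the one imprecise point: within a fixed expansion convention (the paper expands $1/(x_j-x_i)$, $i<j$, with $x_j$ dominant), replacing $(x_j-x_i)^{-a}$ by $(x_i-x_j)^{-a}$ genuinely introduces $(-1)^{a\binom n2}$, which for odd $a\binom n2$ would contradict the positive right-hand side; e.g.\ for $n=2$, $a=b=1$ one gets $\CT_{x_2}\CT_{x_1}(x_1+x_2)(x_1-x_2)^{-1}=-1$ under that convention, whereas the lemma asserts $1$. The correct reconciliation is not a global sign but the relabeling $x_i\mapsto x_{n+1-i}$: it fixes the symmetric factors $(x_1+\cdots+x_n)^M\prod_i x_i^{1-b}$ and carries $\prod_{i<j}(x_i-x_j)^{-a}$, expanded with $x_i$ dominant (the convention of \cite{tesler}), to $\prod_{i<j}(x_j-x_i)^{-a}$ with $x_j$ dominant, with no sign. (The paper itself makes this identification silently: Lemma \ref{lem:ab} produces $(x_j-x_i)^{-a}$ while Lemma \ref{ale} is stated with $(x_i-x_j)^{-a}$.) Second, you invoke \eqref{eq:zeilberger} with Morris parameter $a_{\mathrm{Morris}}=b-1$, so $b=1$ (the Tesler case) needs the $a_{\mathrm{Morris}}=0$ instance, outside the ``positive integers'' hypothesis of Lemma \ref{morrisID} as quoted, though standard (it is exactly the case behind \eqref{eq:cry}); and $b=0$ would need $a_{\mathrm{Morris}}=-1$, which your limit does not cover, though the statement is degenerate there anyway. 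For $b\ge1$ your derivation is sound.
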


Now we are ready to prove Theorem \ref{111}.

\begin{theorem3*} For $n\geq 2$ and nonnegative integers
$a,b$ we have that
 \[
\vol\F_{K_{n+1}^{a,b}}(1,1, \ldots, 1, -n)=\big((b-1)n+a{\textstyle\binom{n}{2}}\big)!
\prod_{i=0}^{n-1} \frac{\Gamma(1+a/2)}{\Gamma(1+(i+1)a/2)\Gamma(b+ia/2)}.
\]
\end{theorem3*}

\proof Immediate from Lemmas \ref{lem:ab} and \ref{ale}.\qed

\section{The faces of the Catalan polytope}
\label{sec:struc}

The face structure of all flow polytopes of the complete graph was studied in \cite{tesler}. Here we specialize these results in order to enumerate the vertices of  
$\F_{K_{n+1}}(1,1,0, \ldots, 0, -2)$. The first part of this section follows the exposition of \cite[Section 2]{tesler}.

Let $\mathrm{rstc}_n$ denote the shifted staircase of size $n$. We use the  matrix coordinates $\{(i, j) \,:\, 1 \leq i \leq j \leq n \}$ to describe the cells of $\mathrm{rstc}_n$.
An {\bf $\aa$-Tesler tableau} $T$ (defined in \cite{tesler}) is a $(0,1)$-filling of $\mathrm{rstc}_n$  which satisfies the following three conditions:
\begin{enumerate}
\item for $1 \leq i \leq n$, if $a_i > 0$, there is at least one $1$ in row $i$ of $T$,
\item for $1 \leq i < j \leq n$, if $T(i, j) = 1$, then there is at least one $1$ in row $j$ of $T$, and
\item for $1 \leq j \leq n$, if $a_j = 0$ and $T(i, j) = 0$ for all $1 \leq i < j$, then $T(j, k) = 0$ for all $j \leq k \leq n$.
\end{enumerate}
For example, if $n = 4$ and $\aa = (7,0,3,0)$, then three $\aa$-Tesler tableaux are shown below.  We write the entries of 
$\aa$ in a column to the left of a given $\aa$-Tesler tableau.

\begin{center}
\begin{Young}
,7 & 0  & 1 &1 & 1\cr
,0 & , &  0 & 0  & 1 \cr
,3 &  ,  &, & 1 &  1\cr
, 0 &,   & ,& ,& 1 
\end{Young}
\quad
\begin{Young}
,7&  1  &  0 &  1 & 0 \cr
,0& , & 0 & 0 & 0 \cr
 ,3& ,  &, &  0 & 1\cr
 ,0 &,   & ,& ,& 1 
\end{Young}
\quad
\begin{Young}
,7 & 1  & 1  & 1 &  0\cr
,0 & , & 1 &  1 & 0\cr
,3 & ,  &, & 1 & 0\cr
,0 & ,   & ,& ,& 0
\end{Young}
\end{center}

The {\bf dimension} $\dim(T)$ of an $\aa$-Tesler tableau $T$ is $\sum_{i = 1}^n(r_i - 1)$, where 
\begin{equation*}
r_i = \begin{cases}
\text{the number of $1$'s in row $i$ of $T$} & \text{if row $i$ of $T$ is nonzero,} \\
1 & \text{if row $i$ of $T$ is zero.}
\end{cases}
\end{equation*}
In other words, $\dim(T)$ is the number of $1$'s minus the number of
nonzero rows. From left to right, the dimensions of the tableaux shown
above are $3, 1$, and $3$.

Given two $\aa$-Tesler tableaux $T_1$ and $T_2$, we write $T_1 \leq T_2$ to mean that for all $1 \leq i \leq j \leq n$ we have
$T_1(i,j) \leq T_2(i,j)$. 
It is shown in
\cite{tesler} that the $\aa$-Tesler tableaux  partially ordered by
$\leq$ with a unique minimal element adjoint form  a poset graded by dimension of the Tesler tableaux plus one. We refer to the poset as the {\bf $\aa$-Tesler tableaux poset}.

\begin{thm} \cite{tesler}
\label{face-poset-characterization}
Let $\aa=(a_1,\dots,a_n) \in (\mathbb{Z}_{\geq 0})^n$ and
$\aa'=(a_1,\dots,a_n,-\sum_{i=1}^n a_i)$.  The face poset of
$\F_{K_{n+1}}(\aa')$ is isomorphic to the $\aa$-Tesler tableaux poset.
In particular, the vertices of $\F_{K_{n+1}}(\aa')$ are in bijection
with the $\aa$-Tesler tableaux of dimension 0.
\end{thm}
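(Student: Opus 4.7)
The plan is to establish a dimension- and order-preserving bijection between faces of $\F_{K_{n+1}}(\aa')$ and $\aa$-Tesler tableaux via the ``support of a flow'' correspondence.

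First, I would recall the standard fact (as in \cite{mm, hille}) that faces of a flow polytope $\F_G(\aa')$ are in one-to-one correspondence with subgraphs $H \subseteq G$ that arise as the support of some $\aa'$-flow on $G$: the face associated to $H$ is the sub-polytope $\F_H(\aa')$, face-inclusion corresponds to subgraph inclusion, and the empty face corresponds to the adjoint minimum. The vertices are exactly those $H$ for which $\F_H(\aa')$ is a point, which occurs precisely when $H$ is acyclic.

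Next, I would encode subgraphs of $K_{n+1}$ by $(0,1)$-fillings of $\mathrm{rstc}_n$: take the convention that cell $(i,j)$ with $1\le i<j\le n$ encodes the edge $(i,j)$ while the diagonal cell $(i,i)$ encodes the edge $(i,n+1)$ to the sink. Under this identification, row $i$ of $T$ lists the outgoing edges of vertex $i$, and the $\binom{n+1}{2}$ cells of $\mathrm{rstc}_n$ bijectively encode the $\binom{n+1}{2}$ edges of $K_{n+1}$.

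The bulk of the argument is then to show that a filling $T$ arises as $T_H$ for some face-support $H$ if and only if the three Tesler conditions hold. Necessity of each condition is a direct translation of flow conservation: if $a_i>0$ then vertex $i$ has net outflow $a_i$, so some outgoing edge must be present (condition 1); if $T(i,j)=1$ with $i<j\le n$ then flow enters $j$, and since $a_j\ge 0$, some outgoing edge from $j$ must carry flow (condition 2); if $a_j=0$ and nothing enters $j$, no positive outflow is possible, so row $j$ must be zero (condition 3). For sufficiency I would give an explicit construction of a positive $\aa'$-flow with the prescribed support, processing vertices from $n$ down to $1$: at each step the (already-determined) amount of flow entering vertex $i$, plus $a_i$, can be split into strictly positive parts along the outgoing edges indicated by row $i$, with conditions (1)--(3) guaranteeing both that there is at least one outgoing edge whenever needed and that the total amount to split is strictly positive.

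Finally, I would verify order- and dimension-preservation. Order-preservation is immediate: $T_{H_1}\le T_{H_2}$ iff $H_1\subseteq H_2$. For dimensions, the face $\F_H(\aa')$ has $\dim \F_H(\aa')=|E(H)|-|V^*(H)|+c(H)$ where $V^*(H)$ is the set of vertices incident to some edge of $H$ and $c(H)$ is the number of connected components of $H$; because every active vertex in such an $H$ is connected to the sink $n+1$ by an outgoing directed path, $c(H)=1$ and $|V^*(H)|$ equals the number of nonzero rows of $T_H$ plus one (for the sink). Substituting yields $\dim \F_H(\aa')=|E(H)|-\#\{\text{nonzero rows of }T_H\}=\dim T_H$, which matches the tableau dimension and in particular sends vertices of the polytope to the dimension-zero tableaux.

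The main obstacle is the sufficiency direction: producing a genuinely positive $\aa'$-flow with the prescribed support. Conditions (1)--(3) are manifestly the necessary combinatorial constraints, but converting them into an actual flow requires an inductive construction in which one must carefully ensure that the amount remaining to be distributed at each vertex is strictly positive and that the support of the constructed flow coincides exactly with the set of cells marked $1$ (no edge accidentally gets flow zero). A clean way to handle this is to first produce \emph{some} $\aa'$-flow supported in the prescribed edge set, then perturb it within the affine span of the face using an interior point of $\F_H(\aa')$ whose existence is guaranteed by the combinatorial conditions.
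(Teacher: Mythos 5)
The paper itself contains no proof of this theorem: it is quoted verbatim from \cite{tesler}, so the only available comparison is with the argument given there. Your proposal follows essentially that same route: identify faces of $\F_{K_{n+1}}(\aa')$ with supports of $\aa'$-flows (the standard fact you cite from \cite{hille,mm}), check that the three Tesler conditions are precisely the flow-conservation constraints on realizable supports (using $a_j\ge 0$), and compute $\dim \F_H(\aa') = |E(H)|-|V^*(H)|+c(H)$, which collapses to $\#\{1\text{'s in }T_H\}-\#\{\text{nonzero rows of }T_H\}$ because every active vertex has an increasing directed path to the sink $n+1$, forcing $c(H)=1$ and $|V^*(H)|=\#\{\text{nonzero rows}\}+1$. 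All of this is correct and matches the source.

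Two corrections to the sufficiency step, which you rightly identify as the crux. First, your sweep runs in the wrong direction: you process vertices ``from $n$ down to $1$'' while invoking the ``already-determined'' inflow at vertex $i$, but since every edge of $K_{n+1}$ is directed from a smaller to a larger vertex, the inflow at $i$ is determined by rows $1,\dots,i-1$, so the sweep must run from $1$ up to $n$. With that order the greedy construction is complete exactly as you describe: at vertex $i$ the amount to distribute is $a_i$ plus the inflow along the marked incoming edges, which is strictly positive by induction; condition (3) (in contrapositive) guarantees this total is $>0$ whenever row $i$ is nonzero, so it can be split into strictly positive parts on the marked outgoing edges, while conditions (1) and (2) guarantee the total is $0$ whenever row $i$ is zero. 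Hence the constructed flow has support \emph{exactly} the marked cells, with no need for a perturbation step. Second, your proposed fallback --- find some flow supported inside the marked set and then perturb it ``using an interior point of $\F_H(\aa')$ whose existence is guaranteed by the combinatorial conditions'' --- is circular as stated: the existence of a point whose support is all of $H$ is precisely what is being proved at that moment. Once the sweep direction is fixed, that remark can simply be deleted.
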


We need some definitions in order to compute the number of vertices of
$\F_{K_{n+1}}(\aa)$.

A \textbf{decreasing forest} on a subset $V\subseteq [n]$ is a rooted
forest such that if $u$ is a child of $v$, then $u<v$.  For a
decreasing forest $F$, a \textbf{root} is a vertex with no parent and a
\textbf{leaf} is a vertex with no child. For example, the decreasing
forest in Figure~\ref{fig:tree1} has roots $9,2,10$ and leaves
$1,3,8,2,5$. Note that an isolated vertex is both a root and a
leaf. Note also that every connected component of $F$ has a unique
root which is the largest vertex in that component. 

\begin{figure}
  \centering
\includegraphics{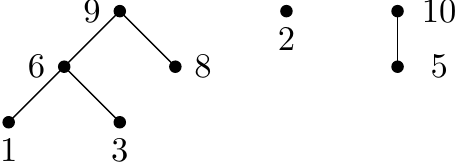}  
  \caption{A decreasing forest.}
\label{fig:tree1}
\end{figure}

We introduce another definition which is essentially the same as
decreasing forest. A \textbf{directed decreasing forest} is a directed
graph obtained from a decreasing forest by orienting each edge
$\{i,j\}$ with $i<j$ by $(i,j)$ and adding a loop $(r,r)$ for each
root $r$. Note that there is a unique way to construct a directed
decreasing forest from a decreasing forest and vice versa. For
example, the directed decreasing forest in Figure~\ref{fig:tree2}
corresponds to the decreasing forest in Figure~\ref{fig:tree1}. 

\begin{figure}
  \centering
\includegraphics{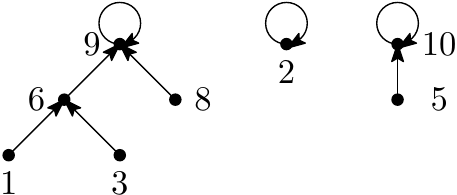}  
  \caption{A directed decreasing forest.}
\label{fig:tree2}
\end{figure}

Now we show that the number of $\aa$-Tesler tableaux of dimension $0$
is equal to the number of certain decreasing forests.

\begin{lem}\label{forest}
  Let $\aa \in (\mathbb{Z}_{\geq 0})^n$ whose nonzero entries are
  exactly in positions $s_1,s_2,\dots,s_k$. Then the number of
  $\aa$-Tesler tableaux of dimension $0$ is equal to the number of
  decreasing forests on $V$ with
  $\{s_1,\dots,s_k\}\subseteq V\subseteq [n]$ in which the leaves are
  contained in $\{s_1,s_2,\dots,s_k\}$.
\end{lem}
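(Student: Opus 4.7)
The plan is to construct an explicit bijection between $\aa$-Tesler tableaux of dimension $0$ and the decreasing forests described in the statement. The dimension condition $\sum_i(r_i-1)=0$ forces $r_i=1$ for every row $i$, so in a dimension-$0$ tableau $T$ each nonzero row contains exactly one $1$. Let $V=V(T)\subseteq[n]$ be the set of nonzero rows, and for $i\in V$ let $\pi(i)$ denote the unique column with $T(i,\pi(i))=1$; the shape of $\mathrm{rstc}_n$ gives $\pi(i)\ge i$. Define a directed graph $F(T)$ on vertex set $V$ by declaring $\pi(i)$ to be the parent of $i$ when $\pi(i)>i$, and $i$ to be a root when $\pi(i)=i$. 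Since every vertex has out-degree $1$ and edges go from smaller to larger indices, $F(T)$ is a directed decreasing forest in the sense of Figure~\ref{fig:tree2}.

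Next I would translate the three defining conditions of $\aa$-Tesler tableaux into combinatorial conditions on $F(T)$. Condition (1) is the statement that $a_i>0$ implies row $i$ is nonzero, i.e.\ $\{s_1,\dots,s_k\}\subseteq V$. Condition (2) says that whenever $\pi(i)>i$, row $\pi(i)$ is nonzero, i.e.\ $\pi(V)\subseteq V$, which is precisely what is needed for $F(T)$ to actually be a forest on $V$. Finally, the contrapositive of condition (3) reads: if $\ell\in V$ and the column of $\ell$ above the diagonal is empty, then $a_\ell>0$. But ``column of $\ell$ above the diagonal is empty'' is exactly the statement that $\ell$ has no child in $F(T)$, so condition (3) is equivalent to saying every leaf of $F(T)$ lies in $\{s_1,\dots,s_k\}$. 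Together, $F(T)$ is a decreasing forest on a set $V$ with $\{s_1,\dots,s_k\}\subseteq V\subseteq[n]$ whose leaves all lie in $\{s_1,\dots,s_k\}$.

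For the inverse, given such a decreasing forest $F$ on $V$, define $T$ by placing a $1$ in cell $(i,\pi(i))$ for each $i\in V$ (with $\pi(i)$ the parent of $i$, or $i$ itself if $i$ is a root) and filling every other cell with $0$. Running the translation above in reverse shows that $T$ is an $\aa$-Tesler tableau of dimension $0$, and the two constructions are manifestly inverse to one another. The only step requiring any care is the convention that an isolated vertex $v\in V$ counts simultaneously as a root and as a leaf: such a $v$ gives $T(v,v)=1$ and is required by the leaf condition to lie in $\{s_1,\dots,s_k\}$, which is consistent with condition (3) (since $a_v>0$ for $v\in\{s_1,\dots,s_k\}$). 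Beyond this bookkeeping the proof is entirely an unpacking of definitions, so no real obstacle arises.
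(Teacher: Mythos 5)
Your proof is correct and takes essentially the same approach as the paper: both construct the bijection sending a dimension-$0$ tableau $T$ to the directed decreasing forest whose vertices are the nonzero rows of $T$, whose edges record the off-diagonal $1$'s, and whose roots/loops record the diagonal $1$'s, and both translate conditions (1)--(3) into the containment $\{s_1,\dots,s_k\}\subseteq V$ and the leaf condition. The only difference is bookkeeping: you observe directly that $\dim(T)=0$ forces exactly one $1$ per nonzero row (since each summand $r_i-1$ is nonnegative), whereas the paper reaches the same out-degree-one structure by counting edges over connected components of the associated graph.
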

\begin{proof}
  It is sufficient to construct a bijection between the set
  $\mathcal T$ of $\aa$-Tesler tableaux of dimension $0$ and the set
  $\mathcal D$ of directed decreasing forest on $V$ with
  $\{s_1,\dots,s_k\}\subseteq V\subseteq [n]$ in which the leaves are
  contained in $\{s_1,s_2,\dots,s_k\}$.

  For $T\in\mathcal T$, we construct the directed graph
  $D_T=(V_T,E_T)$ as follows. The vertex set $V_T$ is the set of
  integers $i$ such that row $i$ of $T$ is nonzero. There is a
  directed edge $(i,j)\in E_T$ if and only if $T(i,j)=1$. For example,
  if $T$ is the Tesler tableau in Figure~\ref{fig:tesler}, then $D_T$
  is the directed decreasing forest in Figure~\ref{fig:tree2}. 

  \begin{figure}
    \centering
\begin{Young}
, & ,1  & ,2 & ,3 & ,4 & ,5 & ,6 & ,7 & ,8 & ,9 & ,10 \cr
,1 & 0 & 0  & 0 & 0 & 0 & 1 & 0 & 0 & 0 & 0 \cr
,2 &, & 1  & 0 & 0 & 0 & 0 & 0 & 0 & 0 & 0 \cr
,3 &, & ,  & 0 & 0 & 0 & 1 & 0 & 0 & 0 & 0 \cr
,4 &, & ,  & , & 0 & 0 & 0 & 0 & 0 & 0 & 0 \cr
,5 &, & ,  & , & , & 0 & 0 & 0 & 0 & 0 & 1 \cr
,6 &, & ,  & , & , & , & 0 & 0 & 0 & 1 & 0 \cr
,7 &, & ,  & , & , & , & , & 0 & 0 & 0 & 0 \cr
,8 &, & ,  & , & , & , & , & , & 0 & 1 & 0 \cr
,9 &, & ,  & , & , & , & , & , & , & 1 & 0 \cr
,10 &, & ,  & , & , & , & , & , & , & , & 1 \cr
\end{Young}
\caption{The Tesler tableau corresponding to the directed decreasing
  forest in Figure~\ref{fig:tree2}. Here, for readability, the row
  numbers and column numbers are indicated.}
\label{fig:tesler}
  \end{figure}

  We need to check $D_T\in \mathcal D$.  Since $\dim(T)=0$, the number
  of 1's equals the number of nonzero rows in $T$. This is equivalent
  to the condition that in $D_T$ the number of vertices equals the
  number of edges. Consider a connected component $C$ of $D_T$. Here,
  we assume that two vertices are connected if there is a path from
  one vertex to another ignoring the orientations of the edges in the
  path.  By the second condition (2) of the definition of $\aa$-Tesler
  tableau, for every vertex $i$ of $D_T$, there is an edge $(i,j)$
  with $i\le j$.  Thus, the vertex with largest label in $C$ has a
  loop.  Since $C$ is connected, if $C$ has $k$ vertices, then $C$
  must have at least $k-1$ except loops. Together with the loop at the
  largest vertex, $C$ has at least $k$ edges. If $C$ has exactly $k$
  edges, then $C$ must be a directed tree with a loop attached at the
  largest vertex. Moreover, $C$ is a directed decreasing tree for the
  following reason. If we follow a directed path, by the second
  condition (2) of the definition of $\aa$-Tesler tableau, we can
  always find a loop at the end. If $C$ is not a directed decreasing
  tree then there is a vertex of out-degree at least $2$, which
  implies that there are at least two loops. This is a contradiction
  to the fact that $C$ has $k$ edges.

  Thus we have $D_T\in \mathcal D$. It is easy to see that the map
  $T\mapsto D_T$ is a desired bijection.
\end{proof}

Using the previous lemma, we can compute the number of vertices of
$\F_{K_{n+1}}(\aa)$ when $\aa$ has two nonzero elements.

\begin{thm}
Let $n=r+s+2$ and 
\[
\aa=(1,\overbrace{0,\dots,0}^r,1,\overbrace{0,\dots,0}^s,-2).
\]
Then the number of vertices of $\F_{K_{n+1}}(\aa)$ is $2^{r+1}3^s$. 
\end{thm}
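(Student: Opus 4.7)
The plan is to apply Lemma~\ref{forest}: since the nonzero entries of $\aa$ occur at positions $1$ and $q:=r+2$, the vertices of $\F_{K_{n+1}}(\aa)$ are in bijection with decreasing forests on $V$ satisfying $\{1,q\}\subseteq V\subseteq[n]$ and having all leaves in $\{1,q\}$. I will enumerate these forests by splitting on structural type.

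The key structural observation is that every connected component must contain at least one leaf, hence a vertex of $\{1,q\}$, so at most two components appear. A component with a single leaf $\ell\in\{1,q\}$ is forced to be a decreasing path from its root (the maximum) down to $\ell$. A component containing both $1$ and $q$ has at most two leaves, so it is either a path (with $q$ internal and unique leaf $1$) or a ``Y-shape'' consisting of a unique branching vertex $\beta>q$, a trunk from the root down to $\beta$, and two descending paths from $\beta$ terminating at $1$ and at $q$.

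\textbf{Two-component case.} Write $T_1,T_q$ for the components containing $1,q$. Because $T_q$ is a decreasing path ending at $q$, we have $T_q\subseteq[q,n]$. Consequently, each $v\in[2,q-1]$ contributes $2$ independent options ($v\notin V$, or $v\in V$ forced into $T_1$), and each $v\in[q+1,n]$ contributes $3$ options ($v\notin V$, $v\in T_1$, or $v\in T_q$); the two paths are then recovered uniquely by sorting in decreasing order. This yields $2^r\cdot 3^s$ forests. \textbf{One-component case.} Path trees are determined by the underlying set $V\supseteq\{1,q\}$ alone (the tree is the decreasing chain through $V$), giving $2^{r+s}$ forests. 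For Y-trees with branching vertex $\beta\in[q+1,n]$, I parameterize by the trunk $\{\beta\}\cup S$ for $S\subseteq[\beta+1,n]$ ($2^{n-\beta}$ choices), the subset $V\cap[2,q-1]$ (all lying on the branch to $1$, $2^r$ choices), and for each $v\in[q+1,\beta-1]$ a three-way choice (not in $V$, on the branch to $1$, or on the branch to $q$), giving $3^{\beta-q-1}$ choices. Summing the geometric series $\sum_{\beta=q+1}^n 3^{\beta-q-1}\cdot 2^{n-\beta}=3^s-2^s$ yields $2^r(3^s-2^s)$ Y-trees, so the one-component total is $2^{r+s}+2^r(3^s-2^s)=2^r\cdot 3^s$.

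Adding the two cases gives $2\cdot 2^r\cdot 3^s=2^{r+1}\cdot 3^s$, as required. The most delicate point is justifying the Y-tree parameterization: one must verify that every vertex's role (trunk, branch-to-$1$, branch-to-$q$, or absent from $V$) can be chosen independently and that each Y-tree is recovered exactly once, which follows once one notes that the trunk, branch-to-$1$ chain, and branch-to-$q$ chain are each forced to be the decreasing sort of the corresponding subset. Once this is established, the remaining steps collapse into the single geometric sum above.
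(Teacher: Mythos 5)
Your proof is correct. It coincides with the paper's own argument up to and including the two-component case: the same reduction via Theorem~\ref{face-poset-characterization} and Lemma~\ref{forest}, the same observation that a forest with all leaves in $\{1,r+2\}$ has at most two components, and the same count of $2^r3^s$ for two-component forests. The genuine difference is in the one-component case, which is where the real work lies. The paper treats all one-component trees uniformly by a bijection: writing $A$ and $B$ for the vertex sets of the paths from $1$ and from $r+2$ up to the root and $m=\min(A\cap B)$, it encodes the tree as the pair of disjoint sets $X=A\setminus\{1,m\}$ and $Y=\{i\in B: r+2<i\le m\}$, and shows these can be prescribed freely subject to $X\subseteq\{2,\dots,r+1\}\cup\{r+3,\dots,n\}$ and $Y\subseteq\{r+3,\dots,n\}$; this exhibits the count $2^r3^s$ structurally (two admissible states for each of the $r$ small vertices, three for each of the $s$ large ones), with the one-leaf trees being exactly those with $m=r+2$. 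You instead split by shape: $2^{r+s}$ path trees, plus Y-trees stratified by the branching vertex $\beta$, summing the geometric series $2^r\sum_{\beta=r+3}^{n}3^{\beta-r-3}2^{n-\beta}=2^r(3^s-2^s)$, so that the one-component total $2^{r+s}+2^r(3^s-2^s)=2^r3^s$ emerges from an algebraic cancellation rather than from a bijection. Your route is arguably easier to verify: given $\beta$, each vertex's role (trunk, branch-to-$1$, branch-to-$q$, absent) is an independent choice, and every piece of the tree is forced to be the decreasing sort of its vertex set, so bijectivity of the parameterization is immediate; by contrast, the paper's reconstruction of $(A,B)$ from $(X,Y)$ takes a little care. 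What the paper's bijection buys is a uniform, summation-free explanation of the factor $2^r3^s$, which is more in the spirit of the product formulas the paper is chasing; what your version buys is elementarity and transparency, at the cost of the final answer looking like a numerical coincidence.
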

\begin{proof}
  By Theorem~\ref{face-poset-characterization} and Lemma~\ref{forest},
  the number of vertices of $\F_{K_{n+1}}(\aa)$ is equal to the number
  of decreasing forests on $V$ such that
  $\{1,r+2\}\subseteq V\subseteq[r+s+2]$ and the leaves are contained
  in $\{1,r+2\}$. Suppose that $F$ is such a decreasing forest. Since
  every tree in $F$ has at least one leaf, $F$ has at most $2$
  trees. We will count how many ways to construct $F$ in the following
  two cases.

  Case 1: $F$ has two trees $T_1$ and $T_2$, where $T_1$ has only one
  leaf $1$ and $T_2$ has only one leaf $r+2$. Since $F$ is a
  decreasing forest and each tree has only one leaf, each tree is
  determined by its vertices. For $2\le i\le r+1$, we have two
  possibilities: $i$ is a vertex of $T_1$ or not. For
  $r+3\le j\le s+r+2$, we have three possibilities: $j$ is a vertex of
  $T_1$, a vertex of $T_2$ or not a vertex of them. Thus there are
  $2^r3^s$ ways to construct such $F$. 

  Case 2: $F$ has only one tree. Then $F$ has two leaves which are $1$
  and $r+2$ or only one leaf which is $1$. Note that $r+s+2$ is the
  unique root in $F$. Let $A$ (resp.~$B$) be the set of vertices in
  the unique path from $1$ (resp.~$r+2$) to $r+s+2$. Then $F$ is
  uniquely determined by $A$ and $B$. Let $m=\min(A\cap B)$. Observe
  that $r+2\le m\le r+s+2$ and we have $m=r+2$ if and only if $F$ has
  only one leaf.  We define two sets $X$ and $Y$ as follows.
\[
X = A-\{1,m\}, \qquad Y = \{i\in B: r+2<i\le m\}.
\]
Then $X$ and $Y$ satisfy
\begin{enumerate}
\item $X\cap Y=\emptyset$,
\item $X \subseteq  \{2,3,\dots,r+1,r+3,r+4,\dots,r+s+2\}$,
\item $Y \subseteq \{r+3,r+4,\dots,r+s+2\}.$
\end{enumerate}
The two sets $A$ and $B$ can be reconstructed from $X$ and $Y$ by
\begin{align*}
A &= X\cup\{1,\max (Y\cup\{r+2\})\}, \\  
B &= (Y \cup\{r+2\}) \cup \{i\in A: i>\max (Y \cup\{r+2\})\}.
\end{align*}
Thus, $X$ and $Y$ determine $F$. Moreover, any two sets $X$ and $Y$
satisfying the above three conditions will make a decreasing forest
$F$ considered in this case. Thus the number of $F$s in this case is
equal to the number of two sets $X$ and $Y$, which is $2^r3^s$.

By the above two cases, we obtain that the theorem.
\end{proof}

As a corollary we obtain the number of vertices of our main flow
polytopes.

\begin{cor}
The number of vertices of   $\F_{K_{n+1}}(1,1,0, \ldots, 0, -2)$ is
equal to $2\cdot 3^{n-2}$. 
\end{cor}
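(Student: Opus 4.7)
The plan is simply to invoke the preceding Theorem in the case $r=0$ and $s=n-2$. Observe that the netflow vector $(1,1,0,\ldots,0,-2)$ is precisely the specialization of $(1, \overbrace{0,\ldots,0}^r, 1, \overbrace{0,\ldots,0}^s, -2)$ to these parameter values: the two $1$'s occupy adjacent positions $1$ and $2$ (so no intervening zeros, giving $r=0$), and they are followed by $n-2$ zeros before the entry $-2$ at position $n+1$ (giving $s=n-2$). The compatibility condition $n = r+s+2$ is satisfied since $0 + (n-2) + 2 = n$.

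Substituting these values into the Theorem's count $2^{r+1} 3^s$ yields $2^{0+1}\cdot 3^{n-2} = 2\cdot 3^{n-2}$, which is the claimed number of vertices. There is no real obstacle to overcome here, as the corollary is a direct parameter specialization of the more general enumeration just proved; the only thing to verify is that the positions of the two nonzero netflow entries and the total length match up correctly, which is the elementary counting performed above.
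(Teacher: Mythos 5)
Your proposal is correct and matches the paper's intended argument: the corollary follows immediately from the preceding theorem by specializing to $r=0$ and $s=n-2$, which gives $2^{r+1}3^s = 2\cdot 3^{n-2}$. The paper leaves this specialization implicit, and your verification that the netflow vector and the length condition $n=r+s+2$ match up is exactly the routine check required.
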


\section*{Acknowledgements} This work started during a stay of the
second and third authors at the Universit\'e Paris 7 Diderot. The
third author is grateful for the invitation from, support of and
hospitality of the Universit\'e Paris 7.  The authors are grateful to
Alejandro Morales for sharing his Sage codes and Mich\`ele Vergne for helpful
discussions.

\end{document}